\begin{document}
\setlength{\oddsidemargin}{0cm} \setlength{\evensidemargin}{0cm}
\baselineskip=20pt

\theoremstyle{plain} \makeatletter
\newtheorem{theorem}{Theorem}[section]
\newtheorem{Proposition}[theorem]{Proposition}
\newtheorem{Lemma}[theorem]{Lemma}
\newtheorem{Corollary}[theorem]{Corollary}

\theoremstyle{definition}
\newtheorem{notation}[theorem]{Notation}
\newtheorem{exam}[theorem]{Example}
\newtheorem{proposition}[theorem]{Proposition}
\newtheorem{conj}{Conjecture}
\newtheorem{prob}[theorem]{Problem}
\newtheorem{remark}[theorem]{Remark}
\newtheorem{claim}{Claim}
\newtheorem{Definition}[theorem]{Definition}
\newtheorem{Question}[theorem]{Question}

\newcommand{\SO}{{\mathrm S}{\mathrm O}}
\newcommand{\SU}{{\mathrm S}{\mathrm U}}
\newcommand{\Sp}{{\mathrm S}{\mathrm p}}
\newcommand{\so}{{\mathfrak s}{\mathfrak o}}
\newcommand{\Ad}{{\mathrm A}{\mathrm d}}
\newcommand{\m}{{\mathfrak m}}
\newcommand{\g}{{\mathfrak g}}
\newcommand{\h}{{\mathfrak h}}


\numberwithin{equation}{section}
\title[The moment map for the variety of  Leibniz algebras]{The moment map for the variety of  Leibniz algebras}
\author{Zhiqi Chen}
\address[Zhiqi Chen]{School of Mathematics and Statistics, Guangdong University of Technology, Guangzhou 510520, P.R. China}\email{chenzhiqi@nankai.edu.cn}
\author{Saiyu Wang}
\address [Saiyu Wang]{School of Mathematical Sciences and LPMC, Nankai University,
Tianjin 300071, P.R. China}\email{2120200040@mail.nankai.edu.cn}
\author{Hui Zhang}
\address [Hui Zhang]{School of Mathematics, Southeast University, Nanjing 210096, P.R. China}\email{2120160023@mail.nankai.edu.cn}

\begin{abstract}
We consider the moment map $m:\mathbb{P}V_n\rightarrow \textnormal{i}\mathfrak{u}(n)$ for the action of $\textnormal{GL}(n)$ on $V_n=\otimes^{2}(\mathbb{C}^{n})^{*}\otimes\mathbb{C}^{n}$, and   study  the functional  $F_n=\|m\|^{2}$   restricted to the projectivizations of the
algebraic varieties  of  all $n$-dimensional  Leibniz algebras $L_n$ and all $n$-dimensional symmetric Leibniz algebras $S_n$, respectively.
Firstly, we  give a description  of  the  maxima and minima of the functional  $F_n: L_n \rightarrow \mathbb{R}$,  proving that they  are actually  attained   at the symmetric Leibniz algebras. Then, for an arbitrary  critical point  $[\mu]$ of $F_n: S_n \rightarrow \mathbb{R}$,  we characterize  the structure of $[\mu]$ by  virtue of   the nonnegative rationality.  Finally,   we classify the critical  points of $F_n: S_n \rightarrow \mathbb{R}$  for $n=2$, $3$, respectively.
\end{abstract}
\keywords{Moment map; Variety of Leibniz algebras;   Critical point.}
\subjclass[2010]{14L30, 17B30, 53D20.}

\maketitle

\section{Introduction}
In \cite{Lauret03}, Lauret  studied  the moment map for the variety of Lie algebras    and   obtained  many remarkable results for example,
a stratification of the   Lie algebras variety and a description of the critical points,
which  turned to be  very  useful in proving that every Einstein solvmanifold is standard (\cite{Lauret2010}) and
in the characterization of solitons (\cite{BL2018,Lauret2011}). It is  thus  natural and interesting to ask whether  Lauret's results can be generalized, in some way,  to   varieties of   algebras beyond Lie algebras.

Motivated by the  idea, the study has recently  been extended to the  variety of   $3$-Lie algebras  in \cite{ZCL}. Here,  a  $3$-Lie algebra is
 a natural generalization
of the concept of a Lie algebra to the case where the fundamental multiplication operation
is $3$-ary.   See  also \cite{GKM} and  \cite{ZY} for the study of  the  moment map   in  Jordan and  associative algebras.

In this article,  we  study  the moment map for the variety of  \textit{Leibniz algebras}, which  are nonanticommutative versions of Lie algebras.
A Leibniz algebra is a vector space with a multiplication such that every left  multiplication operator is a derivation, which  was  at first  introduced by Bloh  (\cite{Bloh1965}) and later independently rediscovered  by Loday    in the study of  cohomology theory (see \cite{Loday1993,LodayPira1993}).     Leibniz algebras   play an important role in different areas of mathematics and physics \cite{BoHo2020,HoSa2019,KotStr,Lavau19,Lavau20,ShTaZh2021,Str2019,StrWa20},  and we refer to \cite{Feldvoss}  for a nice survey of Leibniz algebras.




For the moment map   in the frame of   Leibniz algebras,  it is defined    as follows: Let $\textnormal{GL}(n)$ be  the complex reductive  Lie group acting  naturally  on   the complex  vector  space $V_n=\otimes^{2}(\mathbb{C}^{n})^{*}\otimes\mathbb{C}^{n}$, i.e.,  the space of all $n$-dimensional complex algebras. The usual Hermitian inner product on $\mathbb{C}^{n}$   induces an $\textnormal{U}(n)$-invariant  Hermitian inner product on $V_n$, which is denoted by  $\langle\cdot,\cdot\rangle$.  Since $\mathfrak{gl}(n)=\mathfrak{u}(n)+\textnormal{i}\mathfrak{u}(n)$,  we may
define a function as follows
\begin{align*}
m: \mathbb{P} V_n \rightarrow \textnormal{i}\mathfrak{u}(n),\quad
(m([\mu]), A)=\frac{(\textnormal{d}\rho_\mu)_{e}A}{\|\mu\|^{2}}, \quad 0 \neq \mu \in V_n, ~~ A\in \textnormal{i}\mathfrak{u}(n),
\end{align*}
where $(\cdot,\cdot)$ is an $\textnormal{Ad}(\textnormal{U}(n))$-invariant real inner product on $\textnormal{i}\mathfrak{u}(n),$ and $\rho_\mu:\textnormal{GL}(n)\rightarrow\mathbb{R}$ is defined by $\rho_\mu(g)=\langle g.\mu, g.\mu\rangle$.
The function $m$ is the moment
map from symplectic geometry, corresponding to the Hamiltonian action  $\textnormal{U}(n)$ of $V_n$ on
the symplectic manifold $\mathbb{P}V_n$ (see \cite{Kirwan98,Ness1984}).
In this article, we  shall  study    the critical points of the functional   $F_n=\|m\|^{2}: \mathbb{P} V_n \rightarrow \mathbb{R}$, and emphasize  those  critical points that lie in   $L_n$ and $S_n$. Here,  $L_n,S_n$ denote  the projectivizations of the
algebraic varieties  of  all $n$-dimensional   Leibniz algebras, and  all $n$-dimensional symmetric Leibniz algebras, respectively.

The article is organized as follows: In Section~\ref{section2}, we recall  some fundamental results of Leibniz algebras (Def.~\ref{l-s}) and symmetric Leibniz algebras (Def.~\ref{sym}).

In Section~\ref{section3}, we  first give the explicit expression of the moment map $m:\mathbb{P} V_n \rightarrow \textnormal{i}\mathfrak{u}(n)$ in terms of $\textnormal{M}_{\mu}$, in fact $m([\mu])=\frac{\textnormal{M}_{\mu}}{\|\mu\|^{2}}$, $[\mu]\in \mathbb{P}V_n$ (Lemma~\ref{Mm}). Then we show that  $[\mu]\in \mathbb{P}V_n$  is   a critical point of $F_n=\|m\|^{2}: \mathbb{P} V_n \rightarrow \mathbb{R}$ if and only if    $\textnormal{M}_{\mu}=c_{\mu} I+D_{\mu}$ for some $c_{\mu} \in \mathbb{R}$ and $D_{\mu} \in \textnormal{Der}(\mu)$ (Thm.~\ref{MID}).

In  Section~\ref{section4},     we prove that  there exists a constant $c>0$ such that the eigenvalues of $cD_\mu$ are integers   for  any critical point $[\mu]\in \mathbb{P}V_n$,  and  if moreover  $[\mu]\in S_n$, we show that the eigenvalues are necessarily   nonnegative (Thm.~\ref{eigenvalue}), which  generalizes the nonnegative  rationality from Lie algebras to symmetric Leibniz algerbas (see   \cite[Thm 3.5]{Lauret03}).
Besides,  we  give a description of  the  extremal points of $F_n:L_n \rightarrow \mathbb{R},$  proving that the minimum value is
attained at semisimple Lie algebras (Thm.~\ref{x}),  while the maximum value is attained  at the direct sum of the  two-dimensional non-Lie symmetric Leibniz algebra with the trivial algebra (Thm.~\ref{max}). Finally, for an arbitrary  critical point  $[\mu]$ of $F_n: S_n \rightarrow \mathbb{R}$,  we characterize the structure of $[\mu]$ by  virtue of   the nonnegative rationality  of $D_\mu$ (Thm.~\ref{structure}--Thm.~\ref{converse}).

In Section~\ref{examples}, we classify the critical points of $F_n: S_n \rightarrow \mathbb{R}$ with $n=2,3,$ which shows that  there exist many critical points that are not Lie algebras. Moreover, we prove that every 2-dimensional   symmetric Leibniz algebra is isomorphic to a critical point
of $F_2;$ and there exist    3-dimensional   symmetric Leibniz algebras  which are not isomorphic to  any  critical point of  $F_3$.

Finally  in Section~\ref{ques}, we  summary the article and collect some natural questions concerning the critical points of $F_n: L_n \rightarrow \mathbb{R}$.

\section{Preliminaries}\label{section2}
In this section,  we recall some basic definitions and results of Leibniz algebras . The ambient field is always assumed to be the complex number field $\mathbb{C}$ unless otherwise  stated.

\begin{Definition}[\cite{Feldvoss,Loday1993}]\label{l-s}
A vector space $\mathfrak{l}$ over  $\mathbb{C}$ with a bilinear map
$\mathfrak{l}\times \mathfrak{l}\rightarrow \mathfrak{l}$, denoted by $(x,y)\mapsto xy$, is called a \textit{Leibniz algebra}, if every left multiplication is a derivation, i.e.,
\begin{align}\label{left}
x(yz)=(xy)z+y(xz)
\end{align}
for all $x,y,z\in \mathfrak{l}.$
\end{Definition}
\begin{remark}
Leibniz algebras are sometimes called \textit{left} Leibniz algebras in the literature, and there is a corresponding notion of \textit{right} Leibniz algebra,
i.e., an algebra  with the property that  every right multiplication is a derivation. In some studies, the authors prefer to call  a right Leibniz algebra a Leibniz algebra. We point out that for our purpose, it actually does not matter which notion is used since the opposite algebra of a left Leibniz algebra is a right Leibniz algebra and vice versa.
\end{remark}

Following Mason and Yamskulna \cite{MY2013},  we introduce the notion of  the symmetric Leibniz algebra as follows.
\begin{Definition}[\cite{MY2013}]\label{sym}
An algebra $\mathfrak{l}$ is called a  \textit{symmetric Leibniz algebra} if it is at the same time   a left  and  a right Leibniz algebra, that is
\begin{align}\label{l-r}
x(yz)&=(xy)z+y(xz), \\
(xy)z&=(xz)y+x(yz),
\end{align}
for all $x,y,z\in \mathfrak{l}.$
\end{Definition}

Every Lie algebra is clearly  a  symmetric Leibniz algebra, and the converse is not true.
In the following,    we make the convention that  an ideal of a Leibniz algebra  always means a two-side ideal.


\begin{Definition}\label{solvable}
Let $\mathfrak{l}$ be a   Leibniz algebra. $\mathfrak{l}$ is called solvable if $\mathfrak{l}^{(r)}=0$ for some $r\in \mathbb{N}$, where $\mathfrak{l}^{(0)}=\mathfrak{l}, \mathfrak{l}^{(k+1)}=\mathfrak{l}^{(k)}\mathfrak{l}^{(k)},k\geq0.$
\end{Definition}

If $I,J$ are any two solvable ideals of $\mathfrak{l}$, then $I+J$ is also  a solvable ideal of $\mathfrak{l}$, so the maximum solvable ideal is unique, called the $radical$ of $\g$ and denoted by $\textnormal{Rad}(\mathfrak{l})$ (\cite{Feldvoss}).
\begin{theorem}[\cite{Barnes2021}]\label{levi}
A Leibniz algebra $\mathfrak{l}$ over a field of characteristic $0$ admits a Levi decomposition, i.e., $\mathfrak{l}=\mathcal{S}+\textnormal{Rad}(\mathfrak{l})$ decomposes into the sum of a semisimple Lie subalgebra $\mathcal{S}$   and the radical satisfying  $\mathcal{S}\cap\textnormal{Rad}(\mathfrak{l})=0.$
\end{theorem}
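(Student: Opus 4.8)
The plan is to reduce to the classical Levi theorem for Lie algebras by passing to the maximal Lie-algebra quotient, and then to lift the semisimple part back through the \emph{Leibniz kernel} using a Whitehead-type vanishing of cohomology.

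First I would analyze the Leibniz kernel
\[
\textnormal{Leib}(\mathfrak{l}):=\textnormal{span}_{\mathbb{C}}\{x^{2}:x\in\mathfrak{l}\}.
\]
Setting $y=x$ in the left Leibniz identity \eqref{left} yields $x(xz)=(x^{2})z+x(xz)$, hence $(x^{2})z=0$ for all $x,z\in\mathfrak{l}$; polarizing gives $(xy+yx)z=0$, so that $\textnormal{Leib}(\mathfrak{l})\cdot\mathfrak{l}=0$. Together with the fact that every left multiplication is a derivation (so $\mathfrak{l}\cdot\textnormal{Leib}(\mathfrak{l})\subseteq\textnormal{Leib}(\mathfrak{l})$), this shows that $\textnormal{Leib}(\mathfrak{l})$ is an abelian two-sided ideal, that the quotient $\bar{\mathfrak{l}}:=\mathfrak{l}/\textnormal{Leib}(\mathfrak{l})$ is anticommutative and therefore a Lie algebra, and that $\textnormal{Leib}(\mathfrak{l})$ is naturally a $\bar{\mathfrak{l}}$-module. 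Being an abelian ideal, $\textnormal{Leib}(\mathfrak{l})\subseteq\textnormal{Rad}(\mathfrak{l})$, and a short argument with preimages of solvable ideals gives $\textnormal{Rad}(\mathfrak{l})/\textnormal{Leib}(\mathfrak{l})=\textnormal{Rad}(\bar{\mathfrak{l}})$.

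Next I would apply the classical Levi decomposition to the Lie algebra $\bar{\mathfrak{l}}$, writing $\bar{\mathfrak{l}}=\bar{\mathcal{S}}\oplus\bar R$ with $\bar{\mathcal{S}}$ semisimple and $\bar R=\textnormal{Rad}(\bar{\mathfrak{l}})$. Let $\pi:\mathfrak{l}\to\bar{\mathfrak{l}}$ be the projection and $\mathfrak{m}:=\pi^{-1}(\bar{\mathcal{S}})$, a Leibniz subalgebra fitting into an extension
\[
0\longrightarrow\textnormal{Leib}(\mathfrak{l})\longrightarrow\mathfrak{m}\longrightarrow\bar{\mathcal{S}}\longrightarrow 0
\]
with abelian kernel on which $\bar{\mathcal{S}}$ acts as a module. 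The heart of the proof is to split this extension: starting from a linear section $\sigma:\bar{\mathcal{S}}\to\mathfrak{m}$, the failure of $\sigma$ to be an algebra map is a $2$-cochain valued in $\textnormal{Leib}(\mathfrak{l})$, and the Leibniz identity forces it to be a cocycle; since the right action of the kernel is trivial, this cocycle is governed by the Lie-algebra cohomology $H^{2}(\bar{\mathcal{S}},\textnormal{Leib}(\mathfrak{l}))$, which vanishes because $\bar{\mathcal{S}}$ is semisimple (Whitehead's second lemma, equivalently complete reducibility in characteristic $0$). Correcting $\sigma$ by the primitive of the cocycle produces a subalgebra $\mathcal{S}\subseteq\mathfrak{m}$ mapping isomorphically to $\bar{\mathcal{S}}$ with $\mathcal{S}\cap\textnormal{Leib}(\mathfrak{l})=0$. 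For any $s,t\in\mathcal{S}$ one has $st+ts\in\mathcal{S}\cap\textnormal{Leib}(\mathfrak{l})=0$, so $\mathcal{S}$ is anticommutative, hence a Lie algebra, and it is semisimple since $\mathcal{S}\cong\bar{\mathcal{S}}$.

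Finally I would assemble the decomposition. Applying $\pi$ shows $\mathcal{S}\cap\textnormal{Rad}(\mathfrak{l})\subseteq\ker\pi\cap\mathcal{S}=\textnormal{Leib}(\mathfrak{l})\cap\mathcal{S}=0$, while $\pi(\mathcal{S}+\textnormal{Rad}(\mathfrak{l}))=\bar{\mathcal{S}}+\bar R=\bar{\mathfrak{l}}$ together with $\ker\pi=\textnormal{Leib}(\mathfrak{l})\subseteq\textnormal{Rad}(\mathfrak{l})$ gives $\mathfrak{l}=\mathcal{S}+\textnormal{Rad}(\mathfrak{l})$, as desired. I expect the main obstacle to be the splitting step: one must verify that the obstruction to lifting genuinely lives in a cohomology group killed by semisimplicity, which requires a careful translation of the Leibniz cocycle condition into Lie-algebra cohomology using that $\textnormal{Leib}(\mathfrak{l})\cdot\mathfrak{l}=0$. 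Everything else is either the classical Lie theory or elementary bookkeeping with the kernel.
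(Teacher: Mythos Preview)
The paper does not prove Theorem~\ref{levi}; it is quoted from Barnes's paper \cite{Barnes2021} as background. So there is no in-paper argument to compare against, and your outline is in fact the standard strategy (and essentially Barnes's): kill the Leibniz kernel to reduce to a Lie algebra, apply the classical Levi theorem there, and lift the semisimple factor back through the abelian ideal $\textnormal{Leib}(\mathfrak{l})$. Your treatment of $\textnormal{Leib}(\mathfrak{l})$ (that it is an abelian two-sided ideal with $\textnormal{Leib}(\mathfrak{l})\cdot\mathfrak{l}=0$, that the quotient is a Lie algebra, and that $\textnormal{Rad}(\mathfrak{l})/\textnormal{Leib}(\mathfrak{l})=\textnormal{Rad}(\bar{\mathfrak{l}})$) and the final assembly of the decomposition are correct.

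The one place that needs more than you have written is the splitting step, and the issue is slightly different from what you anticipate. The obstruction cocycle $\omega(s,t)=\sigma(s)\sigma(t)-\sigma([s,t])$ is an element of $\textnormal{Hom}(\bar{\mathcal S}\otimes\bar{\mathcal S},\,\textnormal{Leib}(\mathfrak{l}))$ and is \emph{not} antisymmetric: its symmetric part is $\tfrac12(\sigma(s)\sigma(t)+\sigma(t)\sigma(s))\in\textnormal{Leib}(\mathfrak{l})$, which has no reason to vanish. Triviality of the right action simplifies the Leibniz coboundary to $(d\tau)(s,t)=s\cdot\tau(t)-\tau([s,t])$, but it does \emph{not} force $\omega$ to lie in the alternating cochains, so the obstruction genuinely lives in Leibniz cohomology $HL^{2}(\bar{\mathcal S},\textnormal{Leib}(\mathfrak{l}))$, not in the Chevalley--Eilenberg group $H^{2}_{\textnormal{Lie}}$. (For an abelian $\mathfrak g$ with trivial coefficients these two groups already differ.) What makes the argument go through is the Leibniz analogue of Whitehead's lemma: $HL^{2}(\mathfrak g,M)=0$ for any finite-dimensional module $M$ over a semisimple Lie algebra $\mathfrak g$ in characteristic~$0$. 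This is true---one proves it with the Casimir element or via complete reducibility, exactly as Barnes does---but it is a separate statement, not obtained by ``translating'' the Leibniz cocycle into an antisymmetric one. Once you replace the appeal to $H^{2}_{\textnormal{Lie}}$ by the corresponding Leibniz vanishing, the rest of your argument is complete.
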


\begin{Definition}\label{nilpotent}
A   Leibniz algebra $\mathfrak{l}$ is called \textit{nilpotent} if there exists a positive integer $n$ such that any
product of $n$ elements in $\mathfrak{l}$, no matter how associated, is zero.
\end{Definition}

For a  Leibniz algebra, we define
$^{1}\mathfrak{l}:=\mathfrak{l}, ~ ^{k+1}\mathfrak{l}:=\mathfrak{l}(^{k}\mathfrak{l}), k\geq 1.$
Furthermore,  we define
\begin{align*}
\mathfrak{l}_1:=\mathfrak{l},\quad \mathfrak{l}_{k}=\sum_{i=1}^{k-1}\mathfrak{l}_i\mathfrak{l}_{k-i},~~ k\geq 2.
\end{align*}
Then we have the following theorem.
\begin{theorem}[\cite{Feldvoss}]\label{nil}
For any integer $k\geq 1$,  then $^{k}\mathfrak{l}=\mathfrak{l}_k$.
Moreover, $\mathfrak{l}$ is nilpotent if and only if there exists an positive integer $n$ such that $\mathfrak{l}_n=0.$
\end{theorem}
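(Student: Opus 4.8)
The plan is to establish the two assertions in turn. The identity ${}^{k}\mathfrak{l}=\mathfrak{l}_k$ will rest on a single multiplicative lemma, which I regard as the heart of the matter: for all $i,j\ge 1$,
\[
{}^{i}\mathfrak{l}\cdot {}^{j}\mathfrak{l}\subseteq {}^{i+j}\mathfrak{l}.
\]
This is the only place where the defining identity \eqref{left} enters. I would prove it by induction on $i$, uniformly in $j$. The base case $i=1$ is just the definition ${}^{1}\mathfrak{l}\cdot{}^{j}\mathfrak{l}=\mathfrak{l}\cdot{}^{j}\mathfrak{l}={}^{j+1}\mathfrak{l}$. For $i\ge 2$, a generator of ${}^{i}\mathfrak{l}=\mathfrak{l}\cdot{}^{i-1}\mathfrak{l}$ has the form $xu$ with $x\in\mathfrak{l}$ and $u\in{}^{i-1}\mathfrak{l}$; given $v\in{}^{j}\mathfrak{l}$, the left Leibniz identity \eqref{left} rearranges to
\[
(xu)v=x(uv)-u(xv).
\]
By the inductive hypothesis $uv\in{}^{i-1}\mathfrak{l}\cdot{}^{j}\mathfrak{l}\subseteq{}^{i+j-1}\mathfrak{l}$, so $x(uv)\in\mathfrak{l}\cdot{}^{i+j-1}\mathfrak{l}={}^{i+j}\mathfrak{l}$; and $xv\in{}^{j+1}\mathfrak{l}$, so $u(xv)\in{}^{i-1}\mathfrak{l}\cdot{}^{j+1}\mathfrak{l}\subseteq{}^{i+j}\mathfrak{l}$, again by the hypothesis. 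Both summands lie in ${}^{i+j}\mathfrak{l}$, which closes the induction.

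Granting the lemma, the identity ${}^{k}\mathfrak{l}=\mathfrak{l}_k$ follows by a short induction on $k$, with base case $\mathfrak{l}_1=\mathfrak{l}={}^{1}\mathfrak{l}$. The inclusion ${}^{k}\mathfrak{l}\subseteq\mathfrak{l}_k$ is clear, since ${}^{k}\mathfrak{l}=\mathfrak{l}\cdot{}^{k-1}\mathfrak{l}=\mathfrak{l}_1\mathfrak{l}_{k-1}$ is precisely the $i=1$ summand of $\mathfrak{l}_k=\sum_{i=1}^{k-1}\mathfrak{l}_i\mathfrak{l}_{k-i}$. Conversely, the inductive hypothesis identifies each summand $\mathfrak{l}_i\mathfrak{l}_{k-i}$ with ${}^{i}\mathfrak{l}\cdot{}^{k-i}\mathfrak{l}$, which lies in ${}^{k}\mathfrak{l}$ by the lemma; summing gives $\mathfrak{l}_k\subseteq{}^{k}\mathfrak{l}$.

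For the nilpotency criterion I would show that $\mathfrak{l}_k$ coincides with the linear span $P_k$ of all products of $k$ elements of $\mathfrak{l}$ taken in an arbitrary association. Indeed, at its outermost bracket any such product splits as a product of $i$ elements times a product of $k-i$ elements, so bilinearity yields $P_k=\sum_{i=1}^{k-1}P_iP_{k-i}$; since $P_1=\mathfrak{l}=\mathfrak{l}_1$ and $\mathfrak{l}_k$ satisfies the identical recursion, induction gives $P_k=\mathfrak{l}_k$. The equivalence is then immediate: $\mathfrak{l}$ is nilpotent, i.e.\ every product of some fixed number $n$ of elements vanishes regardless of association, exactly when $P_n=0$, that is, when $\mathfrak{l}_n=0$.

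The main obstacle is the multiplicative lemma. Unlike the Lie case there is a priori no symmetry between left and right multiplication, so the induction must be organized to peel off the factor coming from the left-normed series first and then invoke \eqref{left} in the rearranged form $(xu)v=x(uv)-u(xv)$. The double use of the inductive hypothesis---once on $uv$ at level $i-1$, and once on $u(xv)$ at level $i-1$ but with the second index raised to $j+1$---is exactly what balances the weights to $i+j$ and makes the argument close; verifying that only the left Leibniz identity is needed (no right identity is invoked, so the result holds for all Leibniz algebras in the sense of Def.~\ref{l-s}) is the point to watch.
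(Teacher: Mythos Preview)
Your proof is correct. The paper does not supply its own argument for this theorem; it simply quotes the result from Feldvoss's survey \cite{Feldvoss}, so there is no in-paper proof to compare against. Your approach---proving the multiplicative lemma ${}^{i}\mathfrak{l}\cdot{}^{j}\mathfrak{l}\subseteq{}^{i+j}\mathfrak{l}$ by induction on $i$ using only the left Leibniz identity, then deducing ${}^{k}\mathfrak{l}=\mathfrak{l}_k$ by a second induction, and finally identifying $\mathfrak{l}_k$ with the span $P_k$ of all length-$k$ products via the common recursion---is exactly the standard one (and indeed the one in \cite{Feldvoss}). The organization of the induction and the observation that no right Leibniz identity is needed are both handled cleanly.
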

If $I,J$ are  two nilpotent ideals of a Leibniz algebra $\mathfrak{l}$, then $I+J$ is also   a nilpotent ideal of $\mathfrak{l}$, consequently  the maximum nilpotent ideal is unique, called the \textit{nilradical}, denoted by $\textnormal{N}(\mathfrak{l})$ (\cite{Feldvoss,Towers2021}).

\begin{Proposition}[\cite{Towers2021}]
Let $\mathfrak{l}$ be a Leibniz algebra over a field of characteristic zero, then $\textnormal{Rad}(\mathfrak{l})\mathfrak{l}$, $\mathfrak{l}\textnormal{Rad}(\mathfrak{l})\subset\textnormal{N}(\mathfrak{l})$.

\end{Proposition}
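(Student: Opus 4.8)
The goal is to establish the two inclusions $\textnormal{Rad}(\mathfrak{l})\mathfrak{l}\subseteq\textnormal{N}(\mathfrak{l})$ and $\mathfrak{l}\textnormal{Rad}(\mathfrak{l})\subseteq\textnormal{N}(\mathfrak{l})$ at once. Writing $R:=\textnormal{Rad}(\mathfrak{l})$, the plan is to prove that the single subspace $I:=R\mathfrak{l}+\mathfrak{l}R$ is a \emph{nilpotent ideal} of $\mathfrak{l}$. Since $\textnormal{N}(\mathfrak{l})$ is by definition the largest nilpotent ideal, and both $R\mathfrak{l}$ and $\mathfrak{l}R$ are contained in $I$, this yields both desired inclusions simultaneously.

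First I would verify that $I$ is a two-sided ideal, using only the left Leibniz identity \eqref{left} and the fact that $R$ is an ideal. The left multiplications are immediate: expanding $x(ry)$ and $x(yr)$ and noting $xr\in R$ gives $\mathfrak{l}(R\mathfrak{l})\subseteq R\mathfrak{l}$ and $\mathfrak{l}(\mathfrak{l}R)\subseteq\mathfrak{l}R$. For the right multiplications one rewrites, via \eqref{left}, $(ry)z=r(yz)-y(rz)$ and $(yr)z=y(rz)-r(yz)$; since $rz\in R$, the right-hand sides land in $R\mathfrak{l}+\mathfrak{l}R=I$, whence $(R\mathfrak{l})\mathfrak{l},\,(\mathfrak{l}R)\mathfrak{l}\subseteq I$. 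Thus $\mathfrak{l}I\subseteq I$ and $I\mathfrak{l}\subseteq I$. Because $R$ is an ideal we also have $I\subseteq R$, so $I$ is automatically solvable; the entire difficulty is to upgrade solvability to nilpotency.

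For the nilpotency I would pass to the Lie algebra of left multiplications. Writing $L_x$ for left multiplication by $x$, the identity \eqref{left} gives $[L_x,L_y]=L_{xy}$, so $\mathfrak{h}:=\{L_x:x\in\mathfrak{l}\}$ is a Lie subalgebra of $\mathfrak{gl}(\mathfrak{l})$ acting on $V=\mathfrak{l}$. Since $\mathfrak{l}R\subseteq R$, the subspace $L_R:=\{L_r:r\in R\}$ is an ideal of $\mathfrak{h}$, and the relation $L_R^{(k)}=L_{R^{(k)}}$ together with solvability of $R$ shows $L_R$ is solvable, hence $L_R\subseteq\textnormal{Rad}(\mathfrak{h})$. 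Every generator of $I$ satisfies $L_{ry}=[L_r,L_y]$ and $L_{yr}=[L_y,L_r]$, so $L_I\subseteq[\mathfrak{h},L_R]\subseteq[\mathfrak{h},\textnormal{Rad}(\mathfrak{h})]$. Invoking the classical fact (valid in characteristic $0$, after extending scalars to an algebraic closure if necessary) that $[\mathfrak{h},\textnormal{Rad}(\mathfrak{h})]$ lies in the nilradical of $\mathfrak{h}$ and therefore acts nilpotently in every finite-dimensional representation — the weight-vanishing consequence of Lie's theorem, namely that $\textnormal{Rad}(\mathfrak{h})$ acts by a scalar weight $\lambda$ on each irreducible constituent and $\lambda$ kills $[\mathfrak{h},\textnormal{Rad}(\mathfrak{h})]$ — I conclude that $L_x$ is a nilpotent operator on $\mathfrak{l}$ for every $x\in I$.

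Finally, since $I$ is an ideal each $L_x$ with $x\in I$ preserves $I$, and $\{L_x|_I:x\in I\}$ is a Lie algebra of nilpotent operators on $I$ (again because $[L_x,L_y]=L_{xy}$ with $xy\in I$). By the classical Engel theorem these operators admit a common full flag on which they lower degree, which, through the identification ${}^{k}I=I_k$ of Theorem~\ref{nil}, forces ${}^{k}I=0$ for $k$ large; hence $I$ is nilpotent. Being a nilpotent ideal, $I\subseteq\textnormal{N}(\mathfrak{l})$, and therefore $R\mathfrak{l}\subseteq\textnormal{N}(\mathfrak{l})$ and $\mathfrak{l}R\subseteq\textnormal{N}(\mathfrak{l})$. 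The one genuinely nontrivial ingredient, and the step I expect to be the main obstacle, is precisely the passage from solvability to nilpotency — i.e. showing that left multiplication by every element of $I$ acts nilpotently on $\mathfrak{l}$; the ideal property, the containment $I\subseteq R$, and the final appeal to the maximality of $\textnormal{N}(\mathfrak{l})$ are routine bookkeeping with the Leibniz identity.
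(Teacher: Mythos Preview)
The paper does not supply its own proof of this proposition; it is stated as a quoted result from \cite{Towers2021} and used as background. So there is nothing in the paper to compare your argument against line by line.

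That said, your proof is correct and self-contained. The verification that $I=R\mathfrak{l}+\mathfrak{l}R$ is a two-sided ideal uses the left Leibniz identity exactly as you wrote; the passage to the Lie algebra $\mathfrak{h}=\{L_x\}$ via $[L_x,L_y]=L_{xy}$ is the standard device for importing Lie-theoretic results into the Leibniz setting; the identification $L_I\subseteq[\mathfrak{h},\textnormal{Rad}(\mathfrak{h})]$ is clean; and your appeal to the characteristic-zero fact that $[\mathfrak{h},\textnormal{Rad}(\mathfrak{h})]$ acts by nilpotent operators in any finite-dimensional module (with the scalar extension to an algebraic closure handled correctly) is the right tool. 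The final Engel step is also fine: since $I$ is an ideal, each $L_x|_I$ with $x\in I$ is nilpotent, the collection is a Lie algebra under commutators, Engel gives a full flag annihilated step by step, hence ${}^{k}I=0$ for $k>\dim I$, and Theorem~\ref{nil} converts this into nilpotency of $I$ as a Leibniz algebra.

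Your identification of the crux is accurate: the ideal checks and the containment $I\subseteq R$ are bookkeeping, while the real content is the nilpotency of the operators $L_x$ for $x\in I$, and this is precisely where characteristic zero is used. This is essentially the same circle of ideas Towers employs, so your approach is in line with the cited source even though the present paper omits the argument.
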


\section{The moment map for  complex algebras}\label{section3}
In this section, we first recall  Lauret's idea: \textit{varying  brackets instead of  metrics}, for the study of metric algebras, then we introduce the moment map for complex algebras.

Let $\mathbb{C}^{n}$ be  the $n$-dimensional  complex vector space.  A metric algebra is  a triple  $(\mathbb{C}^n,\mu,\langle  \cdot  , \cdot \rangle)$, where $\mu:\mathbb{C}^{n}\times\mathbb{C}^{n}\rightarrow\mathbb{C}^n$ is a bilinear map and $\langle  \cdot  , \cdot \rangle$ is a Hermitian inner product on $\mathbb{C}^n$. The triple $(\mathbb{C}^n,\mu,\langle  \cdot  , \cdot \rangle)$  will be  abbreviated as $(\mu,\langle  \cdot  , \cdot \rangle)$ in this article.
\begin{Definition}\label{isometryAndIsometryUpToScaling}
Let $(\mu_{1},\langle  \cdot  , \cdot \rangle _{1})$ and $(\mu_{2},\langle  \cdot  , \cdot \rangle _{2})$ be two metric  algebras.
\begin{enumerate}
\item They are said to be isomorphic if there exists linear isomorphism $\varphi: \mathbb{C}^n\to \mathbb{C}^n$ such that $\varphi(\mu_1(\cdot  , \cdot))=\mu_2 (\varphi(\cdot)  , \varphi(\cdot)),$ and in this case, $\varphi$ is called an algebra isomorphism.
\item They are said to be isometric if there exists an algebra isomorphism $\varphi$ such that $\langle  \cdot  , \cdot \rangle _{1}=\langle \varphi(\cdot)  , \varphi(\cdot) \rangle _{2}.$
\item They are said to be isometric up to scaling if there exists an algebra isomorphism $\varphi$ and $c>0$ such that $\langle  \cdot  , \cdot \rangle _{1}=c\langle \varphi(\cdot)  , \varphi(\cdot) \rangle _{2}.$
\end{enumerate}
\end{Definition}
\begin{remark}
The   Definition~\ref{isometryAndIsometryUpToScaling}   is an analogy of \cite{HTT,M76}, where the (real) metric    Lie algebras and their relations with Riemannian geometry, such as sectional curvatures, left-invariant Einstein metrics and Ricci solitons, are studied.
\end{remark}

Let $V_n=\otimes^{2}({\mathbb{C}^{n}})^{*}\otimes\mathbb{C}^{n}$ be the space of all bilinear maps, and
$$
\mathfrak{M}_n=\{\langle\cdot,\cdot\rangle :  \langle\cdot,\cdot\rangle \textnormal{~is a Hermitian inner product on } \mathbb{C}^{n}\}.
$$
be the moduli space of all  Hermitian inner products on $\mathbb{C}^{n}$, respectively.
Consider the  natural action  of $\textnormal{GL}(n)=\textnormal{GL}(\mathbb{C}^{n})$ on  $V_n$,  i.e.,
\begin{align}\label{Gaction}
g.\mu(X,Y)=g\mu(g^{-1}X,g^{-1}Y),\quad g\in \textnormal{GL}(n), X,Y\in\mathbb{C}^{n}.
\end{align}
then  by Definition~\ref{isometryAndIsometryUpToScaling}, we know that $\textnormal{GL}(n).\mu$ is precisely the isomorphism class of $\mu$. Moreover,  differentiating  (\ref{Gaction}), we obtain the natural action $\mathfrak{gl}(n)$ on  $V_n:$
\begin{align}\label{gaction}
A.\mu(X,Y)=A\mu(X,Y)-\mu(AX,Y)-\mu(X,AY), \quad A\in \mathfrak{gl}(n),\mu\in V_n.
\end{align}
It follows that $A.\mu=0$ if and only if $A\in\textnormal{Der}(\mu),$ the derivation  algebra of $\mu.$
On the other hand,  one knows that the linear group  $\textnormal{GL}(n)$ also acts   on $\mathfrak{M}_n$, i.e.,
\begin{align*}
g.\langle\cdot,\cdot\rangle=\langle g^{-1}(\cdot),g^{-1}(\cdot)\rangle, \quad g\in \textnormal{GL}(n),
\end{align*}
and  this action is  obviously  transitive.

\begin{Lemma}\label{iso}
Two metric  algebras $(\mu, \langle\cdot,\cdot\rangle_1)$ and $(\lambda, \langle\cdot,\cdot\rangle_2)$ are   isometric up to scaling if and only if there exist $g\in \textnormal{GL}(n)$ and $c\ne0$ such that $\lambda=g.\mu$ and $\langle\cdot,\cdot\rangle_2=(cg).\langle\cdot,\cdot\rangle_1.$ In particular, $((cg)^{-1}.\mu, \langle\cdot,\cdot\rangle)$ and $(\mu, g.\langle\cdot,\cdot\rangle)$ are isometric up to scaling.
\end{Lemma}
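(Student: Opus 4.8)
The plan is to translate both requirements in Definition~\ref{isometryAndIsometryUpToScaling}(3) into the language of the two $\textnormal{GL}(n)$-actions and then reconcile the scaling constants. The first observation I would record is that the algebra-isomorphism condition $\varphi(\mu(\cdot,\cdot))=\lambda(\varphi(\cdot),\varphi(\cdot))$ is \emph{identical} to $\varphi.\mu=\lambda$: substituting $X=\varphi^{-1}X'$, $Y=\varphi^{-1}Y'$ into the defining relation (\ref{Gaction}) gives $\varphi.\mu(X',Y')=\varphi\mu(\varphi^{-1}X',\varphi^{-1}Y')=\lambda(X',Y')$, so the two phrasings are interchangeable. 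The second, purely computational ingredient is the effect of a scalar multiple of $g$ on a Hermitian inner product: since $\langle\cdot,\cdot\rangle$ is Hermitian, $\langle s^{-1}v,s^{-1}w\rangle=|s|^{-2}\langle v,w\rangle$ for any scalar $s\neq0$, whence
\begin{equation*}
(sg).\langle\cdot,\cdot\rangle=|s|^{-2}\bigl(g.\langle\cdot,\cdot\rangle\bigr).
\end{equation*}
This single identity governs how the free constant enters, and explains why only $|c|$ matters even though $c$ is allowed to be complex.

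For the forward implication, suppose the metric algebras are isometric up to scaling, witnessed by an algebra isomorphism $\varphi$ and $c_0>0$ with $\langle\cdot,\cdot\rangle_1=c_0\langle\varphi(\cdot),\varphi(\cdot)\rangle_2$. Setting $g=\varphi$, the first observation gives $\lambda=g.\mu$; evaluating the metric relation at $\varphi^{-1}X,\varphi^{-1}Y$ yields $g.\langle\cdot,\cdot\rangle_1=c_0\langle\cdot,\cdot\rangle_2$, that is $\langle\cdot,\cdot\rangle_2=c_0^{-1}\,g.\langle\cdot,\cdot\rangle_1$. Comparing with the scalar identity, the choice $c=\sqrt{c_0}$ makes $|c|^{-2}=c_0^{-1}$, so $\langle\cdot,\cdot\rangle_2=(cg).\langle\cdot,\cdot\rangle_1$. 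The converse simply runs this backwards: from $\lambda=g.\mu$ and $\langle\cdot,\cdot\rangle_2=(cg).\langle\cdot,\cdot\rangle_1=|c|^{-2}\,g.\langle\cdot,\cdot\rangle_1$, one takes $\varphi=g$ and evaluates at $gX,gY$ to obtain $\langle\cdot,\cdot\rangle_1=|c|^2\langle g(\cdot),g(\cdot)\rangle_2$, which is isometry up to scaling with the positive constant $|c|^2$.

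The ``in particular'' assertion I would then deduce directly from the equivalence just established. Taking the first algebra to be $((cg)^{-1}.\mu,\langle\cdot,\cdot\rangle)$ and the second to be $(\mu,g.\langle\cdot,\cdot\rangle)$, the element $h=cg$ satisfies $h.\bigl((cg)^{-1}.\mu\bigr)=\mu$ by the group-action law, so the bracket condition holds; and with $d=|c|^{-1}$ the scalar identity gives $(dh).\langle\cdot,\cdot\rangle=|dc|^{-2}\,g.\langle\cdot,\cdot\rangle=g.\langle\cdot,\cdot\rangle$, since $|dc|=1$. Thus the pair $(h,d)$ verifies the hypotheses of the equivalence, and the two metric algebras are isometric up to scaling. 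There is no deep obstacle here; the only point requiring care is the bookkeeping of the complex scalar $c$ together with the observation, isolated in the displayed identity above, that the Hermitian form sees only its modulus $|c|$, which is what allows the real positive constant demanded by Definition~\ref{isometryAndIsometryUpToScaling}(3) to be recovered on both sides.
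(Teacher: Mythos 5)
Your proposal is correct: the paper states Lemma~\ref{iso} without proof, treating it as immediate from Definition~\ref{isometryAndIsometryUpToScaling} and the two $\textnormal{GL}(n)$-actions, and your argument is exactly that routine verification made explicit. In particular, you correctly isolate the two points that make it work --- the identification of $\varphi.\mu=\lambda$ with the algebra-isomorphism condition, and the identity $(sg).\langle\cdot,\cdot\rangle=|s|^{-2}\bigl(g.\langle\cdot,\cdot\rangle\bigr)$, which shows the Hermitian form only sees $|c|$ and lets the positive constant of Definition~\ref{isometryAndIsometryUpToScaling}(3) be recovered from an arbitrary complex $c\ne 0$.
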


Fix a Hermitian inner product $\langle\cdot,\cdot\rangle$ on $\mathbb{C}^{n}$,  then by Lemma~\ref{iso} we have
$$\bigcup_{g\in \textnormal{GL}(n)}(g.\mu,\langle\cdot,\cdot\rangle)=\bigcup_{g\in \textnormal{GL}(n)}(\mu, g^{-1}.\langle\cdot,\cdot\rangle)$$
in the sense of isometry (Definition~\ref{isometryAndIsometryUpToScaling}). This is precisely the idea: \textit{varying brackets instead of  metrics}, for the study of  metric algebras. By this idea, Lauret  introduced   the moment map for Lie algebras, which has motivated much of the recent study of  homogeneous Riemannian geometry \cite{BL2018,BL2023,lauret2001,Lauret2010,Lauret2011}.

Now, we introduce the moment map for complex algebras. Fix a  Hermitian inner product $\langle\cdot,\cdot\rangle$ on $\mathbb{C}^{n}$, then  it  makes each $\mu\in V_n$  an metric algebra,   and $\textnormal{U}(n).\mu$ is precisely the isometry class of $\mu$ (see Definition~\ref{isometryAndIsometryUpToScaling}).    Moreover,  $\langle\cdot,\cdot\rangle$   induces  a natural  $\textnormal{U}(n)$-invariant  Hermitian inner product on $V_n$ as follows
\begin{align}\label{metric}
\langle\mu,\lambda\rangle:=\sum_{i,j,k}\langle\mu(X_i,X_j),X_{k}\rangle\overline{\langle\lambda(X_i,X_j),X_{k}\rangle},\quad\quad~\mu,\lambda\in  V_n,
\end{align}
where $\{X_1,X_2,\cdots, X_n\}$ is an arbitrary orthonormal basis of  $(\mathbb{C}^{n},\langle\cdot,\cdot\rangle)$. Note that  there is an  $\textnormal{Ad}(\textnormal{U}(n))$-invariant Hermitian inner product on $\mathfrak{gl}(n)$,  i.e.,
\begin{align}\label{product}
(A,B)=\operatorname{tr}AB^{*},~ A,B\in \mathfrak{gl}(n).
\end{align}
where $^{*}$ denotes the conjugate  transpose relative to $(\mathbb{C}^{n},\langle\cdot,\cdot\rangle)$.
The moment map,   corresponding to the Hamiltonian action  of $\textnormal{U}(n)$ on the symplectic
manifold  $\mathbb{P}V_n$,  is defined by
\begin{align}\label{moment}
m: \mathbb{P} V_n \rightarrow \textnormal{i}\mathfrak{u}(n), \quad(m([\mu]), A)=\frac{(\textnormal{d}\rho_\mu)_{e}A}{\|\mu\|^{2}}, \quad 0 \neq \mu \in V_n, A\in \textnormal{i}\mathfrak{u}(n),
\end{align}
where  $\rho_\mu(g)=\langle g.\mu, g.\mu\rangle$, $g\in \textnormal{GL}(n)$. Denote by
$F_n: \mathbb{P} V_n \rightarrow \mathbb{R},  F_n([\mu])=\|m([\mu])\|^{2}=(m([\mu]),m([\mu])),$ the square norm of the moment map. Then it is easy to see that the moment map is $\textnormal{U}(n)$-invariant, i.e.,  $m(k.[\mu])=\textnormal{Ad}(k)m([\mu])$, $\forall k\in \textnormal{U}(n)$. In particular,   $F_n(k.[\mu])=F_n([\mu])$, $\forall k\in \textnormal{U}(n)$.

For each  algebra $\mu\in V_n$, we define $\textnormal{M}_{\mu}\in\textnormal{i}\mathfrak{u}(n)$  as follows
\begin{align}\label{M}
\textnormal{M}_{\mu}=2\sum_{i}{L}^{\mu}_{X_i}({L}^{\mu}_{X_i})^{*}-2\sum_{i}({L}^{\mu}_{X_i})^{*}{L}^{\mu}_{X_i}-2\sum_{i}({R}^{\mu}_{X_i})^{*}{R}^{\mu}_{X_i},
\end{align}
where    ${L}^{\mu}_{X},{R}^{\mu}_{X}: \mathbb{C}^{n} \rightarrow \mathbb{C}^{n}$ are given  by ${L}^{\mu}_{X}(Y)=\mu(X,Y)$ and ${R}^{\mu}_{X}(Y)=\mu(Y,X)$, $\forall Y\in\mathbb{C}^{n}$, and $^{*}$ denotes the conjugate  transpose relative to $(\mathbb{C}^{n},\langle\cdot,\cdot\rangle)$.  One immediately   sees that  $\textnormal{M}_{k.\mu}=\textnormal{Ad}(k)\textnormal{M}_{\mu}$ for any $k\in \textnormal{U}(n),$ and $\textnormal{M}_{c\mu}=|c|^2\textnormal{M}_{\mu}$ for any $0\ne c\in \mathbb{C}.$
Moreover
\begin{align}\label{Mformula}
\langle\textnormal{M}_{\mu} X, Y\rangle=&2 \sum_{i,j} \overline{\langle\mu( X_i,X_j), X\rangle}\langle\mu(X_i,X_j), Y\rangle-2 \sum_{i,j}\langle\mu(X_i,X), X_{j}\rangle \overline{\langle\mu(X_i,Y), X_{j}\rangle} \notag\\
&-2 \sum_{i,j}\langle\mu(X, X_i), X_{j}\rangle \overline{\langle\mu(Y, X_i), X_{j}\rangle}
\end{align}
for any $X,Y\in\mathbb{C}^{n}.$  Note that if  the algebra $\mu$ is  anticommutative, then  $\textnormal{M}_{\mu}$ coincides with  \cite{Lauret03}.

The next lemma  establishes  the   relation between $m([\mu])$ and $\textnormal{M}_{\mu}$, which follows from a straightforward calculation (see also \cite{ZY}).
\begin{Lemma}\label{Mm}
For  any  $0\ne\mu\in V_n$, we have $m([\mu])=\frac{\textnormal{M}_{\mu}}{\|\mu\|^{2}}.$
In particular,  $(\textnormal{M}_{\mu}, A)=2\langle A.\mu,\mu\rangle$ for any $A\in \textnormal{i}\mathfrak{u}(n).$
\end{Lemma}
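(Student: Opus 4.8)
The plan is to unwind both sides of the asserted identity in an orthonormal basis, match them term by term, and then invoke nondegeneracy of the pairing $(\cdot,\cdot)$ on $\textnormal{i}\mathfrak{u}(n)$.

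First I would compute the differential $(\textnormal{d}\rho_\mu)_e A$ for $A\in\textnormal{i}\mathfrak{u}(n)$. Differentiating $t\mapsto \rho_\mu(\exp(tA)) = \langle \exp(tA).\mu, \exp(tA).\mu\rangle$ at $t=0$, and using that $\frac{d}{dt}\big|_{t=0}\exp(tA).\mu = A.\mu$ is precisely the infinitesimal action (\ref{gaction}), the product rule for the Hermitian form gives $(\textnormal{d}\rho_\mu)_e A = \langle A.\mu, \mu\rangle + \langle\mu, A.\mu\rangle = 2\operatorname{Re}\langle A.\mu, \mu\rangle$. Hence by the definition (\ref{moment}), $(m([\mu]), A) = \frac{2\operatorname{Re}\langle A.\mu,\mu\rangle}{\|\mu\|^{2}}$ for every $A\in\textnormal{i}\mathfrak{u}(n)$.

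The heart of the argument is the identity $(\textnormal{M}_\mu, A) = 2\langle A.\mu,\mu\rangle$, which I would establish by expanding both sides in an orthonormal basis $\{X_1,\dots,X_n\}$. Writing $\mu^k_{ij} = \langle\mu(X_i,X_j), X_k\rangle$ and $A_{kl} = \langle A X_l, X_k\rangle$ (so $A_{kl} = \overline{A_{lk}}$ since $A$ is Hermitian), the relation $(A.\mu)(X_i,X_j) = A\mu(X_i,X_j) - \mu(AX_i,X_j) - \mu(X_i,AX_j)$ together with (\ref{metric}) splits $2\langle A.\mu,\mu\rangle$ into three quadruple sums $T_1+T_2+T_3$ coming from the three pieces of the infinitesimal action. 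On the other hand, $(\textnormal{M}_\mu, A) = \operatorname{tr}(\textnormal{M}_\mu A) = \sum_{k,l}A_{lk}\langle\textnormal{M}_\mu X_l, X_k\rangle$, and substituting (\ref{Mformula}) splits it into three sums $S_1+S_2+S_3$ coming from the $L L^{*}$, $L^{*}L$ and $R^{*}R$ pieces of $\textnormal{M}_\mu$. A careful relabelling of the summation indices then yields $T_1 = S_1$ (the $A$-term pairs with the $LL^{*}$-term), $T_3 = S_2$ (the second-slot derivative pairs with the $L^{*}L$-term), and $T_2 = S_3$ (the first-slot derivative pairs with the $R^{*}R$-term), so that the three sums agree. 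I expect this bookkeeping—keeping straight which argument slot each derivative hits and matching it against the correct term of $\textnormal{M}_\mu$—to be the only delicate point.

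Finally I would assemble the conclusion. Since each of $LL^{*}$, $L^{*}L$, $R^{*}R$ is Hermitian, $\textnormal{M}_\mu\in\textnormal{i}\mathfrak{u}(n)$, so $(\textnormal{M}_\mu, A) = \operatorname{tr}(\textnormal{M}_\mu A)$ is real for all $A\in\textnormal{i}\mathfrak{u}(n)$; combined with the identity just proved, this shows $\langle A.\mu,\mu\rangle$ is real, whence $2\operatorname{Re}\langle A.\mu,\mu\rangle = 2\langle A.\mu,\mu\rangle = (\textnormal{M}_\mu, A)$. Therefore $(m([\mu]), A) = \frac{(\textnormal{M}_\mu, A)}{\|\mu\|^{2}} = \left(\frac{\textnormal{M}_\mu}{\|\mu\|^{2}}, A\right)$ for every $A\in\textnormal{i}\mathfrak{u}(n)$, and since $(\cdot,\cdot)$ is a nondegenerate real inner product on $\textnormal{i}\mathfrak{u}(n)$ with both $m([\mu])$ and $\textnormal{M}_\mu/\|\mu\|^{2}$ lying in $\textnormal{i}\mathfrak{u}(n)$, we conclude $m([\mu]) = \textnormal{M}_\mu/\|\mu\|^{2}$. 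The ``in particular'' statement is exactly the identity established in the middle step.
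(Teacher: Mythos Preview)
Your proposal is correct and supplies exactly the ``straightforward calculation'' that the paper refers to but does not spell out. The paper gives no proof beyond citing \cite{ZY}, so your explicit basis expansion and the matching of the three pieces of $(A.\mu)$ against the three pieces of $\textnormal{M}_\mu$ is precisely the intended argument made rigorous; there is nothing further to compare.
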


\begin{Corollary}\label{MD}
For  any  $\mu\in V_n$, then
\begin{enumerate}
\item [(i)] $\operatorname{tr}\textnormal{M}_{\mu}D=0$ for any $D\in \textnormal{Der}(\mu)\cap \textnormal{i}\mathfrak{u}(n);$
\item [(ii)] $\operatorname{tr}\textnormal{M}_{\mu}[A,A^{*}]\geq0$ for any $A\in \textnormal{Der}(\mu),$ and equality holds if and only if $A^{*}\in\textnormal{Der}(\mu).$
\end{enumerate}
\end{Corollary}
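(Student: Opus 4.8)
The plan is to reduce both assertions to Lemma~\ref{Mm} by means of one structural fact: with respect to the induced inner product on $V_n$, the adjoint of the operator $\lambda\mapsto A.\lambda$ is the operator $\lambda\mapsto A^{*}.\lambda$, where $A^{*}$ is the conjugate transpose of $A$ on $(\mathbb{C}^{n},\langle\cdot,\cdot\rangle)$. In other words,
\begin{align*}
\langle A.\lambda,\nu\rangle=\langle\lambda,A^{*}.\nu\rangle,\qquad A\in\mathfrak{gl}(n),\ \lambda,\nu\in V_n.
\end{align*}
To establish this, note that $\textnormal{U}(n)$ acts on $(V_n,\langle\cdot,\cdot\rangle)$ by isometries, so differentiating $\langle k.\lambda,k.\nu\rangle=\langle\lambda,\nu\rangle$ at $k=e$ along $\exp(tB)$ with $B\in\mathfrak{u}(n)$ yields $\langle B.\lambda,\nu\rangle+\langle\lambda,B.\nu\rangle=0$; since $B^{*}=-B$ this is the identity for $B\in\mathfrak{u}(n)$. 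As the action (\ref{gaction}) is complex linear in $A$ while $A\mapsto A^{*}$ is conjugate linear, the identity extends from $\mathfrak{u}(n)$ to all of $\mathfrak{gl}(n)=\mathfrak{u}(n)\oplus\textnormal{i}\mathfrak{u}(n)$. I shall also use that $\textnormal{M}_{\mu}$ is Hermitian, so that $(\textnormal{M}_{\mu},B)=\operatorname{tr}\textnormal{M}_{\mu}B^{*}=\operatorname{tr}\textnormal{M}_{\mu}B$ for Hermitian $B$, and that (\ref{gaction}), being the differential of the linear action (\ref{Gaction}), makes $A\mapsto(A.\,)$ a Lie algebra representation, whence $[A,A^{*}].\mu=A.(A^{*}.\mu)-A^{*}.(A.\mu)$.

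For part (i), let $D\in\textnormal{Der}(\mu)\cap\textnormal{i}\mathfrak{u}(n)$. Then $D$ is Hermitian, so $\operatorname{tr}\textnormal{M}_{\mu}D=(\textnormal{M}_{\mu},D)$, and Lemma~\ref{Mm} gives $(\textnormal{M}_{\mu},D)=2\langle D.\mu,\mu\rangle$. Since $D$ is a derivation of $\mu$ we have $D.\mu=0$ by (\ref{gaction}), and the claim follows at once.

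For part (ii), let $A\in\textnormal{Der}(\mu)$ be arbitrary. A direct check gives $[A,A^{*}]^{*}=[A,A^{*}]$, so $[A,A^{*}]\in\textnormal{i}\mathfrak{u}(n)$ and $\operatorname{tr}\textnormal{M}_{\mu}[A,A^{*}]=(\textnormal{M}_{\mu},[A,A^{*}])=2\langle[A,A^{*}].\mu,\mu\rangle$ by Lemma~\ref{Mm}. Using the representation property together with $A.\mu=0$ yields $[A,A^{*}].\mu=A.(A^{*}.\mu)$; then the adjoint identity above, applied with $\lambda=A^{*}.\mu$, gives
\begin{align*}
\langle[A,A^{*}].\mu,\mu\rangle=\langle A.(A^{*}.\mu),\mu\rangle=\langle A^{*}.\mu,A^{*}.\mu\rangle=\|A^{*}.\mu\|^{2}\geq0.
\end{align*}
Hence $\operatorname{tr}\textnormal{M}_{\mu}[A,A^{*}]=2\|A^{*}.\mu\|^{2}\geq0$, with equality if and only if $A^{*}.\mu=0$, i.e.\ if and only if $A^{*}\in\textnormal{Der}(\mu)$.

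The only point requiring genuine care is the adjoint identity; once it is in hand, both parts are short manipulations of Lemma~\ref{Mm}. I expect the main obstacle to be bookkeeping rather than anything conceptually deep: keeping the two meanings of $*$ (adjoint on $V_n$ versus conjugate transpose on $\mathbb{C}^{n}$) distinct, and correctly tracking the conjugate linearity of $\langle\cdot,\cdot\rangle$ when differentiating the isometry relation.
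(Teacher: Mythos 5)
Your proof is correct and takes essentially the same route as the paper's: both parts reduce to Lemma~\ref{Mm}, with (ii) resting on the identity $\operatorname{tr}\textnormal{M}_{\mu}[A,A^{*}]=2\langle A^{*}.\mu,A^{*}.\mu\rangle\geq0$. The paper simply asserts this identity in a one-line proof, whereas you supply the details it leaves implicit, namely the adjoint relation $\langle A.\lambda,\nu\rangle=\langle\lambda,A^{*}.\nu\rangle$ on $V_n$ and the representation property $[A,A^{*}].\mu=A.(A^{*}.\mu)-A^{*}.(A.\mu)$.
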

\begin{proof}
For \textnormal{(i)}, it follows from Lemma~\ref{Mm} and the fact that $D$ is a Hermitian derivation of $\mu.$ For \textnormal{(ii)}, it follows  from that $\operatorname{tr}\textnormal{M}_{\mu}[A,A^{*}]=2\langle A^{*}.\mu,A^{*}.\mu\rangle\geq0$ for any $A\in \textnormal{Der}(\mu)$, and the fact  $A^{*}.\mu=0$ if and only if $A^{*}\in\textnormal{Der}(\mu).$
\end{proof}

\begin{theorem}[\cite{ZY}]\label{MID}
For the  square norm of the moment map $F_{n}=\|m\|^{2}: \mathbb{P} V_{n} \rightarrow \mathbb{R}$,  the following statements are equivalent:
\begin{enumerate}
\item [(1)] $[\mu] \in \mathbb{P} V_{n}$ is a critical point of $F_{n}$.
\item [(2)] $[\mu] \in \mathbb{P} V_{n}$ is a critical point of $F_{n}|_{\textnormal{GL}(n).[\mu]}.$
\item [(3)] $\textnormal{M}_{\mu}=c_{\mu} I+D_{\mu}$ for some $c_{\mu} \in \mathbb{R}$ and $D_{\mu} \in \textnormal{Der}(\mu)$.
\end{enumerate}
If one of the  statements holds, then
\begin{enumerate}
\item [(i)] $c_\mu=\frac{\operatorname{tr}\textnormal{M}_{\mu}^{2}}{\operatorname{tr}\textnormal{M}_{\mu}}=-\frac{1}{2}\frac{\operatorname{tr}\textnormal{M}_{\mu}^{2}}{\|\mu\|^{2}}<0.$
\item [(ii)] If $\operatorname{tr}D_\mu\ne0$, then $c_\mu=-\frac{\operatorname{tr}{D}_{\mu}^{2}}{\operatorname{tr}{D}_{\mu}}$ and $\operatorname{tr}D_\mu>0.$
\end{enumerate}
\end{theorem}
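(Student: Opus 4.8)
The plan is to reduce the whole statement, via Lemma~\ref{Mm}, to the geometry of the $\textnormal{GL}(n)$-action on $\mu$, and then to read off the scalars by trace manipulations using Corollary~\ref{MD}. Throughout I write $\beta=m([\mu])=\textnormal{M}_\mu/\|\mu\|^2$ and identify $T_{[\mu]}\mathbb{P}V_n$ with the orthogonal complement $\mu^{\perp}\subset V_n$, so that the projection $\pi:V_n\setminus\{0\}\to\mathbb{P}V_n$ has $\pi_{*}=\textnormal{pr}_{\mu^{\perp}}$ and $\pi_{*}(w)=0$ exactly when $w\in\mathbb{C}\mu$.

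First I would establish the gradient formula. Differentiating the moment-map identity of Lemma~\ref{Mm} once and inserting it into the chain-rule expression $dF_n|_{[\mu]}(\xi)=2(\beta,dm_{[\mu]}(\xi))$ gives that $\textnormal{grad}\,F_n([\mu])$ is a positive multiple of $\textnormal{pr}_{\mu^{\perp}}(\beta.\mu)$, where $\beta.\mu$ is the action (\ref{gaction}). Since $\beta.\mu$ lies in the image of the $\mathfrak{gl}(n)$-action, it is tangent to the orbit $\textnormal{GL}(n).[\mu]$, and hence so is $\textnormal{grad}\,F_n([\mu])$. Testing the orbit-restricted gradient against $\textnormal{grad}\,F_n([\mu])$ itself then shows that it vanishes iff $\textnormal{grad}\,F_n([\mu])=0$, which is the equivalence (1)$\Leftrightarrow$(2); and in either case $[\mu]$ is critical iff $\textnormal{pr}_{\mu^{\perp}}(\beta.\mu)=0$, i.e. iff $\textnormal{M}_\mu.\mu\in\mathbb{C}\mu$. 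To convert this into (3), note from (\ref{gaction}) that $I.\mu=-\mu$ and from the remark after (\ref{gaction}) that $D.\mu=0$ exactly for $D\in\textnormal{Der}(\mu)$. Taking $A=\textnormal{M}_\mu$ in Lemma~\ref{Mm} gives $\langle\textnormal{M}_\mu.\mu,\mu\rangle=\tfrac12\operatorname{tr}\textnormal{M}_\mu^2\in\mathbb{R}$, so if $\textnormal{M}_\mu.\mu=z\mu$ then pairing with $\mu$ forces $z\in\mathbb{R}$. Setting $c_\mu=-z$ and $D_\mu=\textnormal{M}_\mu-c_\mu I$ yields $D_\mu.\mu=(z+c_\mu)\mu=0$, hence $D_\mu\in\textnormal{Der}(\mu)$ and $\textnormal{M}_\mu=c_\mu I+D_\mu$; the converse (3)$\Rightarrow$(1) is immediate since then $\textnormal{M}_\mu.\mu=-c_\mu\mu\in\mathbb{C}\mu$.

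For the numerical part, observe that $D_\mu=\textnormal{M}_\mu-c_\mu I\in\textnormal{Der}(\mu)\cap\textnormal{i}\mathfrak{u}(n)$, so Corollary~\ref{MD}(i) gives $\operatorname{tr}\textnormal{M}_\mu D_\mu=0$, and Lemma~\ref{Mm} applied to $A=I$ gives $\operatorname{tr}\textnormal{M}_\mu=(\textnormal{M}_\mu,I)=2\langle I.\mu,\mu\rangle=-2\|\mu\|^2<0$. Expanding $\operatorname{tr}\textnormal{M}_\mu^2=\operatorname{tr}\textnormal{M}_\mu(c_\mu I+D_\mu)=c_\mu\operatorname{tr}\textnormal{M}_\mu$ then gives $c_\mu=\operatorname{tr}\textnormal{M}_\mu^2/\operatorname{tr}\textnormal{M}_\mu=-\tfrac12\operatorname{tr}\textnormal{M}_\mu^2/\|\mu\|^2$, which is statement (i), with $c_\mu<0$ because $\operatorname{tr}\textnormal{M}_\mu^2=\|\textnormal{M}_\mu\|^2>0$. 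For (ii), expanding instead $0=\operatorname{tr}\textnormal{M}_\mu D_\mu=\operatorname{tr}(c_\mu I+D_\mu)D_\mu=c_\mu\operatorname{tr}D_\mu+\operatorname{tr}D_\mu^2$ gives $c_\mu=-\operatorname{tr}D_\mu^2/\operatorname{tr}D_\mu$ whenever $\operatorname{tr}D_\mu\neq0$; since $\operatorname{tr}D_\mu\neq0$ forces $\operatorname{tr}D_\mu^2=\|D_\mu\|^2>0$, the relation $c_\mu\operatorname{tr}D_\mu=-\operatorname{tr}D_\mu^2<0$ together with $c_\mu<0$ forces $\operatorname{tr}D_\mu>0$.

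The \emph{main obstacle} is the gradient-tangency step underlying (1)$\Leftrightarrow$(2): one must verify that the directions transverse to the orbit contribute nothing to $dF_n|_{[\mu]}$, i.e. that $\textnormal{grad}\,F_n([\mu])$ really is a multiple of $\textnormal{pr}_{\mu^{\perp}}(\beta.\mu)$. I expect to handle this by differentiating the identity $(\textnormal{M}_\nu,B)=2\langle B.\nu,\nu\rangle$ in $\nu$, obtaining a bilinear expression in the tangent lift and in $\mu$, and then using the Hermiticity of $\textnormal{M}_\mu$ together with the moment-map equation to collapse the transverse part. This is precisely the Ness-type fact that the negative gradient flow of $F_n$ through $[\mu]$ is the reparametrized orbit of the one-parameter subgroup $\exp\big(t\,m([\mu])\big)$, which is where the non-anticommutative nature of $\mu$ (the extra right-multiplication term in (\ref{M})) must be tracked carefully but does not change the structure of the argument.
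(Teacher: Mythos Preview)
The paper does not supply a proof of this theorem --- it is quoted from \cite{ZY} and stated without argument --- so there is no in-paper proof to compare against. Your argument is correct and follows the standard Kirwan--Ness line: the gradient identity $\textnormal{grad}\,F_n([\mu])=c\,\textnormal{pr}_{\mu^{\perp}}\big(m([\mu]).\mu\big)$ that you flag as the main obstacle is exactly the general formula for the norm-square of a moment map, and once it is in place the equivalences (1)--(3) reduce, as you say, to the observation that $\textnormal{M}_\mu.\mu\in\mathbb{C}\mu$ together with $I.\mu=-\mu$. Your trace manipulations for (i) and (ii) via Corollary~\ref{MD}(i) and Lemma~\ref{Mm} are clean and correct; in particular the reality of $z$ from $\langle\textnormal{M}_\mu.\mu,\mu\rangle=\tfrac12\operatorname{tr}\textnormal{M}_\mu^{2}$ is a nice touch that is easy to overlook.
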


\begin{remark}\label{measures}
By Lemma~\ref{M}, we know that   $\operatorname{tr}\textnormal{M}_{\mu}=-2\langle\mu , \mu\rangle=-2\|\mu\|^{2}$ for all $0\ne \mu\in V_n.$ On the other hand $$\|m[\mu]-\frac{\operatorname{tr}{m[\mu]}}{n}I\|^{2}=\|m[\mu]\|^{2}-2\cdot\frac{\operatorname{tr}{m[\mu]}}{n}\cdot\operatorname{tr}{m[\mu]}
+\left(\frac{\operatorname{tr}{m[\mu]}}{n}\right)^2\cdot n.$$ So by Lemma~\ref{M},   we have
\begin{align*}
\|m[\mu]-\frac{\operatorname{tr}{m[\mu]}}{n}I\|^{2}=F_{n}([\mu])-\frac{4}{n}.
\end{align*}
That is,  $F_{n}([\mu])$ measures in some sense  how far $m([\mu])$ is from the identity. If we  interpret  $\textnormal{M}_{\mu}$ as some 'curvature' of the metric algebra $(\mu,\langle  \cdot  , \cdot \rangle)$, which is of course invariant under isometry (Definition~\ref{isometryAndIsometryUpToScaling}),
then the critical point of $F_n$  can  be thought to find the 'best' Hermitian inner product (realtive to the curvature) in an isomorphism class of  an algebra.
\end{remark}

Moreover, note that for any $\mu \in V_n,$  $0$ lies in the boundary of $\textnormal{GL}(n).\mu,$   so a result due to Ness can be stated as follows
\begin{theorem}[\cite{Ness1984}]\label{min_cri} If $[\mu]$ is a critical point of the functional $F_{n}: \mathbb{P} V_{n} \mapsto \mathbb{R}$  then
\begin{enumerate}
\item [(i)] $\left.F_{n}\right|_{\mathrm{GL} (n) .[\mu]}$ attains its minimum value at $[\mu]$.
\item [(ii)] $[\lambda] \in \mathrm{GL} (n).[\mu]$ is a critical point of $F_{n}$ if and only if $[\lambda] \in \mathrm{U}(n).[\mu]$.
\end{enumerate}
\end{theorem}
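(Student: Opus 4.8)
The plan is to follow Ness's classical argument, reducing everything to the convexity of a single function along the geodesics of the symmetric space $\textnormal{GL}(n)/\textnormal{U}(n)$. Since $F_n$ is $\textnormal{U}(n)$-invariant and every $g\in\textnormal{GL}(n)$ has a Cartan decomposition $g=k\exp(A)$ with $k\in\textnormal{U}(n)$ and $A\in\textnormal{i}\mathfrak{u}(n)$, it suffices to understand $F_n$ along the real families $t\mapsto[\exp(tA).\mu]$ with $A\in\textnormal{i}\mathfrak{u}(n)$. First I would record the fundamental convexity input: for fixed $0\ne\mu\in V_n$ and $A\in\textnormal{i}\mathfrak{u}(n)$, the function $\phi_A(t)=\log\|\exp(tA).\mu\|^{2}$ is convex. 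Writing $\nu_t=\exp(tA).\mu$ and using that the induced action of $A$ on $V_n$ is Hermitian, a direct computation gives
\begin{align*}
\phi_A''(t)=\frac{4\|A.\nu_t\|^{2}\|\nu_t\|^{2}-4\langle A.\nu_t,\nu_t\rangle^{2}}{\|\nu_t\|^{4}}\ge0
\end{align*}
by Cauchy--Schwarz, with equality exactly when $A.\nu_t\in\mathbb{C}\nu_t$. Combining with Lemma~\ref{Mm} yields $\phi_A'(t)=(m([\nu_t]),A)$, so the ``moment-map projection'' $t\mapsto(m([\exp(tA).\mu]),A)$ is monotone non-decreasing.

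Next I would establish the gradient formula $\mathrm{grad}\,F_n|_{[\nu]}$ is a positive multiple of $m([\nu]).[\nu]$, the tangent vector in $T_{[\nu]}\mathbb{P}V_n$ generated by the infinitesimal action of $m([\nu])\in\textnormal{i}\mathfrak{u}(n)$; this follows from $F_n=(m,m)$ together with the defining property of the moment map. In particular $\mathrm{grad}\,F_n$ is everywhere tangent to the $\textnormal{GL}(n)$-orbit, so the negative gradient flow preserves each orbit, and $[\mu]$ is critical precisely when $m([\mu]).\mu\in\mathbb{C}\mu$. This is consistent with Theorem~\ref{MID}: there $\textnormal{M}_\mu=c_\mu I+D_\mu$ with $D_\mu.\mu=0$ and $I.\mu=-\mu$, whence $\textnormal{M}_\mu.\mu=-c_\mu\mu\in\mathbb{C}\mu$.

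For the two conclusions, I would run the negative gradient flow from an arbitrary orbit point $[\nu]=[\exp(A).\mu]$; along it $F_n$ is non-increasing, and the convexity of the Kempf--Ness functional $g\mapsto\log\|g.\mu\|^{2}$ on $\textnormal{GL}(n)/\textnormal{U}(n)$ forces the flow to converge, within the orbit, to a critical point $[\mu']$, so $F_n([\nu])\ge F_n([\mu'])$. The decisive rigidity comes from the strict part of the convexity: transverse to the $\textnormal{U}(n)$-directions the functional is strictly convex, so its set of minimizers along the orbit is a single $\textnormal{U}(n)$-orbit. Since $F_n$ is $\textnormal{U}(n)$-invariant, all critical points on the orbit share one value, giving $F_n([\mu'])=F_n([\mu])$ and hence part~(i). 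Part~(ii) follows simultaneously: the implication $[\lambda]\in\textnormal{U}(n).[\mu]\Rightarrow[\lambda]$ critical is immediate from the equivariance $\textnormal{M}_{k.\mu}=\textnormal{Ad}(k)\textnormal{M}_\mu$, while the converse uses~(i) to force $F_n([\lambda])=F_n([\mu])$ and then the same rigidity to place $[\lambda]$ in $\textnormal{U}(n).[\mu]$.

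I expect the main obstacle to be the convergence-and-rigidity step rather than the convexity computation: because the orbit $\textnormal{GL}(n).[\mu]$ is noncompact and generally not closed, one must control the Kempf--Ness functional at infinity to guarantee that the negative gradient flow actually converges to a critical point \emph{inside} the orbit and that the critical set meets the orbit in a single compact $\textnormal{U}(n)$-orbit. This is the analytic heart of Ness's theorem, and I would invoke it directly from \cite{Ness1984}, or reprove it through the properness of $\phi_A$ along directions in which the orbit does not degenerate, combined with the already-established monotonicity of $t\mapsto(m([\exp(tA).\mu]),A)$.
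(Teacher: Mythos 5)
A preliminary remark on the comparison itself: the paper does not prove this statement --- it is imported as a black box from Ness \cite{Ness1984} (``a result due to Ness can be stated as follows''), so your attempt can only be measured against that classical argument. Your preparatory steps are correct and standard: the convexity of $\phi_A(t)=\log\|\exp(tA).\mu\|^{2}$ with $\phi_A'(t)=(m([\exp(tA).\mu]),A)$, the fact that $\mathrm{grad}\,F_n$ at $[\nu]$ is a positive multiple of the tangent vector generated by $m([\nu])\in\textnormal{i}\mathfrak{u}(n)$, hence that the negative gradient flow preserves $\textnormal{GL}(n)$-orbits, and the identification of criticality with $\textnormal{M}_\mu.\mu\in\mathbb{C}\mu$, consistent with Theorem~\ref{MID}.

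The gap is in your ``convergence-and-rigidity'' step, and it is conceptual, not merely technical. You argue that convexity of the Kempf--Ness functional $g\mapsto\log\|g.\mu\|^{2}$ forces the negative gradient flow of $F_n$ to converge to a critical point \emph{inside} the orbit, and that strict convexity ``transverse to the $\textnormal{U}(n)$-directions'' makes the minimizing set a single $\textnormal{U}(n)$-orbit. This conflates two different objects: critical points of the Kempf--Ness functional along an orbit correspond to \emph{zeros} of the moment map, $m([\lambda])=0$, and its strict-convexity/uniqueness statement (the Kempf--Ness theorem) concerns only the level $m=0$, not critical points of $\|m\|^{2}$ with $m\neq0$. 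In the present setting the distinction is fatal: by Remark~\ref{measures}, $\operatorname{tr}\textnormal{M}_\mu=-2\|\mu\|^{2}$, so $(m([\mu]),I)=-2$ for every $[\mu]\in\mathbb{P}V_n$ and the moment map never vanishes; consequently the Kempf--Ness functional has no critical points on any orbit here, and its convexity can neither produce convergence of the flow within the orbit (in general the limit of the negative gradient flow of $\|m\|^{2}$ lies only in the orbit \emph{closure}) nor the rigidity in (ii). The genuine content of Ness's theorem is exactly this step, and her proof runs differently: fix $\beta=m([\mu])$, use that criticality makes $[\mu]$ a fixed point of $\exp(t\beta)$, decompose $V_n$ into eigenspaces of $\beta$, and work with the associated parabolic subgroup $P_\beta$ (equivalently, with the $\textnormal{GL}(n)$-invariance of the Kirwan--Ness strata) to bound $F_n(g.[\mu])$ below by $\|\beta\|^{2}$ and to show the critical set meets the orbit in exactly one $\textnormal{U}(n)$-orbit. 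Your closing plan to ``invoke it directly from \cite{Ness1984}'' is circular, since that invocation \emph{is} the theorem being proved; as it stands, the proposal sets up the correct framework but does not establish either conclusion.
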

The Ness theorem  says that if $[\mu]\in \mathbb{P} V_{n}$ is a critical point, then $\mathrm{U} (n).[\mu]$ is precisely the set of critical points that are contained in $\mathrm{GL} (n).[\mu].$  Those  $\mathrm{GL} (n)$-orbits, which contain a critical point, are called distinguished orbits in the literature.

\section{The critical points of the variety of   Leibniz  algebras }\label{section4}
The spaces $\mathscr{L}_n$,  $\mathscr{S}_n$ of all $n$-dimensional  Leibniz algebras and symmetric Leibniz algebras   are  algebraic sets since they are given by polynomial conditions.
Denote by $L_n$ and  $S_n$  the projective algebraic varieties obtained by projectivization of $\mathscr{L}_n$ and  $\mathscr{S}_n$, respectively. Then by Theorem~\ref{MID}, we know that the critical points of $F_{n}: L_{n} \rightarrow \mathbb{R}$, and $F_{n}: S_{n} \rightarrow \mathbb{R}$ are precisely the critical points
of $F_n:\mathbb{P}V_n\rightarrow\mathbb{R}$ which lie in $L_n$ and $S_n$, respectively.
\subsection{The rationality and nonnegative property}
The following    rationality  and nonnegative property  are  generalizations of  \cite{Lauret03} from Lie algebras to Leibniz  algebras and symmetric Leibniz  algebras, respectively.

\begin{theorem}\label{eigenvalue}
Let $[\mu]\in \mathbb{P}V_n$ be a critical point of $F_n:\mathbb{P}V_n\rightarrow\mathbb{R}$ with $\textnormal{M}_{\mu}=c_{\mu} I+D_{\mu}$ for some $c_{\mu} \in \mathbb{R}$ and $D_{\mu} \in \textnormal{Der}(\mu)$. Then there exists a constant $c>0$ such that the eigenvalues of $cD_\mu$ are  integers  prime to each other, say $k_1< k_2<\cdots < k_r \in \mathbb{Z}$ with multiplicities $d_1,d_2,\cdots,d_r\in \mathbb{N}.$ If moreover $[\mu]\in S_{n}$, then  the integers are nonnegative.
\end{theorem}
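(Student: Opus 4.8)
The plan is to work in an orthonormal eigenbasis of the Hermitian operator $D_\mu$ and to exploit the grading it induces on $\mathbb{C}^{n}$, treating the rationality and the nonnegativity as two separate arguments. First I would record the setup. Since $\textnormal{M}_\mu\in\textnormal{i}\mathfrak{u}(n)$ is Hermitian and $c_\mu\in\mathbb{R}$, the derivation $D_\mu=\textnormal{M}_\mu-c_\mu I$ is again Hermitian, hence diagonalizable with real eigenvalues; fix an orthonormal basis $\{X_k\}$ with $D_\mu X_k=\lambda_k X_k$, $\lambda_k\in\mathbb{R}$, and set $V_\alpha=\ker(D_\mu-\alpha I)$. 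Because $D_\mu\in\textnormal{Der}(\mu)$, the structure constants $\mu_{ij}^{k}:=\langle\mu(X_i,X_j),X_k\rangle$ satisfy $\mu_{ij}^{k}\neq0\Rightarrow\lambda_i+\lambda_j=\lambda_k$, equivalently $\mu(V_\alpha,V_\beta)\subseteq V_{\alpha+\beta}$. I then record the support $W=\{(i,j,k):\mu_{ij}^{k}\neq0\}$ and the \emph{rational} subspace $U=\{x\in\mathbb{R}^{n}:x_i+x_j=x_k\text{ for all }(i,j,k)\in W\}$, noting that the eigenvalue vector $\beta:=(\lambda_1,\dots,\lambda_n)$ lies in $U$.

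For the rationality, the key is a projection formula. In the chosen basis $\textnormal{M}_\mu$ is diagonal with entries $c_\mu+\lambda_k=\langle\textnormal{M}_\mu X_k,X_k\rangle$, and substituting the three sums of \eqref{Mformula} and relabelling the summation indices in the three terms yields, for every $x\in\mathbb{R}^{n}$,
\[
\sum_{k} x_k\big(c_\mu+\lambda_k\big)=-2\sum_{i,j,k}\big|\mu_{ij}^{k}\big|^{2}\,(x_i+x_j-x_k).
\]
For $x\in U$ each nonzero term on the right is supported on $W$, where $x_i+x_j-x_k=0$, so the right‑hand side vanishes. Hence $c_\mu\mathbf{1}+\beta\perp U$; combined with $\beta\in U$ this forces $\beta=-c_\mu\,P_U(\mathbf{1})$, where $P_U$ is orthogonal projection onto $U$. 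Writing $U=\ker A$ for the integer matrix $A$ whose rows are $e_i+e_j-e_k$ $((i,j,k)\in W)$, we have $P_U=I-A^{\mathsf T}(AA^{\mathsf T})^{+}A$, which has rational entries, so $P_U(\mathbf{1})\in\mathbb{Q}^{n}$. Since $c_\mu\neq0$ (indeed $c_\mu<0$ by Theorem~\ref{MID}), all $\lambda_k$ are rational multiples of one another; choosing $c>0$ to clear denominators and then dividing by the greatest common divisor produces the coprime integers $k_1<\dots<k_r$ with the asserted multiplicities (the case $D_\mu=0$ being trivial).

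For the nonnegativity on $S_n$, the formula $\beta=-c_\mu P_U(\mathbf{1})$ with $c_\mu<0$ reduces the claim to $P_U(\mathbf{1})\ge0$, i.e.\ to $\lambda_0:=\min_k\lambda_k\ge0$. Here I would bring in the extra symmetric Leibniz structure that is unavailable for general Leibniz algebras: the Leibniz kernel $\operatorname{Leib}(\mu)=\operatorname{span}\{\mu(x,x)\}$ is a two‑sided ideal that is annihilated on both sides (so it is central), it is $D_\mu$‑invariant, and $\mu(x,y)+\mu(y,x)\in\operatorname{Leib}(\mu)$, while \emph{both} $L^{\mu}_x$ and $R^{\mu}_x$ are derivations. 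Assuming $\lambda_0<0$ for contradiction, the grading already gives $\mu(V_{\lambda_0},V_{\lambda_0})\subseteq V_{2\lambda_0}=0$, and any product having a $V_{\lambda_0}$‑factor can be nonzero only against a factor of nonnegative weight. Tracing the diagonal identity $c_\mu+\lambda_k=\langle\textnormal{M}_\mu X_k,X_k\rangle$ over the minimal eigenspace $V_{\lambda_0}$ expresses $d_0(c_\mu+\lambda_0)$ as (mass flowing into $V_{\lambda_0}$) minus (mass of left‑ and right‑multiplications by $V_{\lambda_0}$), and I would use the identity $L^{\mu}_x+R^{\mu}_x\in\operatorname{Hom}(\mathbb{C}^n,\operatorname{Leib}(\mu))$ together with the two Leibniz identities to pair these masses and force a contradiction with $\lambda_0<0$.

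The main obstacle is precisely this last step. The grading (i.e.\ the fact that $D_\mu$ is a derivation) by itself yields only the \emph{vacuous} inequality $c_\mu d_0+\operatorname{tr}\!\big(D_\mu|_{V_{\lambda_0}}\big)\le0$, which is automatically satisfied when a negative weight is present and so gives no contradiction; the same obstruction appears if one instead sums over the whole negative part $\bigoplus_{\lambda<0}V_\lambda$. To upgrade this to a genuine contradiction one must translate the symmetric Leibniz identities into honest relations among the squared structure‑constant masses $p_{\alpha\beta}=\sum_{X_i\in V_\alpha,\,X_j\in V_\beta}\|\mu(X_i,X_j)\|^{2}$ sitting in \emph{different} argument slots — comparing $p_{\lambda_0\beta}$ and $p_{\alpha\lambda_0}$ with the in‑flowing masses $p_{\alpha\beta}$ with $\alpha+\beta=\lambda_0$. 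This is exactly the input that symmetric Leibniz algebras supply (via centrality of $\operatorname{Leib}(\mu)$ and the symmetry $\mu(x,y)+\mu(y,x)\in\operatorname{Leib}(\mu)$) and that fails for non‑symmetric Leibniz algebras, consistent with the theorem asserting nonnegativity only on $S_n$. Carrying out this mass‑balance cleanly is where I expect the real work to lie.
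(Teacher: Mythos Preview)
Your rationality argument is correct and is essentially the standard one (this is the approach in Lauret's original paper and in the 3-Lie case the paper cites). The nonnegativity argument, however, has a genuine gap, and you yourself flag the mass-balance step as unfinished. The paper bypasses all of that combinatorics with a short derivation-based argument whose ingredients you already list but never assemble.

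Here is the missing idea. You note that in a symmetric Leibniz algebra both $L^{\mu}_{X}$ and $R^{\mu}_{X}$ are derivations of $\mu$. Combine this with the general inequality (Corollary~\ref{MD}(ii)): for \emph{any} $A\in\textnormal{Der}(\mu)$,
\[
\operatorname{tr}\textnormal{M}_{\mu}[A,A^{*}]=2\langle A^{*}.\mu,A^{*}.\mu\rangle\ge 0.
\]
Take $0\ne X$ with $D_{\mu}X=c_{1}X$, $c_{1}$ the smallest eigenvalue. Since $D_{\mu}\in\textnormal{Der}(\mu)$ one has $[D_{\mu},L^{\mu}_{X}]=L^{\mu}_{D_{\mu}X}=c_{1}L^{\mu}_{X}$, and therefore
\[
c_{1}\operatorname{tr}L^{\mu}_{X}(L^{\mu}_{X})^{*}
=\operatorname{tr}[D_{\mu},L^{\mu}_{X}](L^{\mu}_{X})^{*}
=\operatorname{tr}[\textnormal{M}_{\mu},L^{\mu}_{X}](L^{\mu}_{X})^{*}
=\operatorname{tr}\textnormal{M}_{\mu}[L^{\mu}_{X},(L^{\mu}_{X})^{*}]\ge 0,
\]
and likewise $c_{1}\operatorname{tr}R^{\mu}_{X}(R^{\mu}_{X})^{*}\ge 0$. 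If either operator is nonzero this forces $c_{1}\ge 0$; if both vanish then $X$ is central, and \eqref{Mformula} gives $\langle\textnormal{M}_{\mu}X,X\rangle\ge 0$, i.e.\ $c_{1}\ge -c_{\mu}>0$. That is the whole argument; no structure-constant bookkeeping is needed.

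Your mass-balance route is not obviously salvageable as stated: the relation $\mu(x,y)+\mu(y,x)\in\textnormal{Leib}(\mu)$ ties $p_{\alpha\beta}$ to $p_{\beta\alpha}$ only modulo contributions landing in $\textnormal{Leib}(\mu)$, and centrality of $\textnormal{Leib}(\mu)$ controls multiplications \emph{by} it rather than \emph{into} it, so the cancellations you are hoping for between in-flow and out-flow at the minimal weight do not drop out of those identities alone. The derivation inequality above is precisely the device that converts the symmetric Leibniz hypothesis into a one-sided sign, and it is what your proposal is missing.
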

\begin{proof}
The first part follows from    \cite{ZCL} (see also \cite{ Lauret03}). We only prove the last statement.
The case $D_\mu=0$ is trivial.  In the sequel, we assume that $D_\mu$ is nonzero. Noting that  $D_\mu$ is  Hermitian,    there is an  orthogonal decomposition
\begin{align*}
\mathbb{C}^{n}=\mathfrak{l}_1\oplus\mathfrak{l}_2\oplus\cdots\oplus\mathfrak{l}_r,~~ r\geq 2
\end{align*}
where $\mathfrak{l}_i:=\{X\in\mathbb{C}^{n}|D_\mu X=c_i X\}$ are the eigenspaces of $D_\mu$ corresponding to the eigenvalues $c_1<c_2<\cdots<c_r\in\mathbb{R}$, respectively. Suppose  that $[\mu]\in S_{n}$, and  $0\ne X\in\mathbb{C}^{n}$ satisfies $D_\mu X=c_1X$. Then we have
\begin{align*}
c_1 L_X^{\mu}=[D_\mu,L_X^{\mu}],\\
c_1 R_X^{\mu}=[D_\mu,R_X^{\mu}].
\end{align*}
It follows that
\begin{align}\label{c1-l}
c_1\operatorname{tr} L_X^{\mu}(L_X^{\mu})^{*}=\operatorname{tr}[D_\mu,L_X^{\mu}](L_X^{\mu})^{*}=\operatorname{tr}[\textnormal{M}_\mu,L_X^{\mu}](L_X^{\mu})^{*}
=\operatorname{tr}\textnormal{M}_\mu[L_X^{\mu},(L_X^{\mu})^{*}],
\end{align}
and
\begin{align}\label{c1-r}
c_1\operatorname{tr} R_X^{\mu}(R_X^{\mu})^{*}=\operatorname{tr}[D_\mu,R_X^{\mu}](R_X^{\mu})^{*}=\operatorname{tr}[\textnormal{M}_\mu,R_X^{\mu}](R_X^{\mu})^{*}
=\operatorname{tr}\textnormal{M}_\mu[R_X^{\mu},(R_X^{\mu})^{*}].
\end{align}
Since $L_X^{\mu},R_X^{\mu}$ are derivations of $\mu,$   we conclude from  Corollary~\ref{MD} that
\begin{align*}
 c_1\operatorname{tr} L_X^{\mu}(L_X^{\mu})^{*}\geq 0 \quad \textnormal{and}\quad c_1\operatorname{tr} R_X^{\mu}(R_X^{\mu})^{*}\geq 0.
\end{align*}
If  $L_X^{\mu}$ or $R_X^{\mu}$ is not zero, then $c_1\geq 0.$ If  $L_X^{\mu}$ and $R_X^{\mu}$ are both zero, then $X$ necessarily lies in the center of $\mu$. By (\ref{Mformula}), we have
\begin{align}
\langle\textnormal{M}_{\mu} X, X\rangle=2\sum_{i,j}|\langle\mu(X_i,X_j), X\rangle|^{2}\geq 0.
\end{align}
Since $\textnormal{M}_{\mu}=c_\mu I+D_\mu$,  then $0\leq \langle\textnormal{M}_{\mu} X, X\rangle=(c_\mu+c_1)\langle X, X\rangle$. Using Theorem~\ref{MID},  we know $c_1\geq-c_\mu>0.$
This completes the proof.
\end{proof}

\begin{theorem}\label{positive}
Let $[\mu]$ be a critical point of $F_{n}: S_{n} \rightarrow \mathbb{R}$ with $\textnormal{M}_{\mu}=c_{\mu} I+D_{\mu}$ for some $c_{\mu} \in \mathbb{R}$ and $D_{\mu} \in \textnormal{Der}(\mu)$.   If   $[\mu]$ is  nilpotent, then  $D_\mu$ is positive definite. Consequently, all nilpotent critical points of $F_{n}: S_{n} \rightarrow \mathbb{R}$ are $\mathbb{N}$-graded.
\end{theorem}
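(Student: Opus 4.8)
The plan is to argue by contradiction, ruling out $0$ as an eigenvalue of $D_\mu$. By Theorem~\ref{eigenvalue} all eigenvalues of $D_\mu$ are nonnegative, so it suffices to show that $0$ is not an eigenvalue. Suppose instead that the smallest eigenvalue is $c_1=0$, and let $\mathfrak{l}_1=\ker D_\mu\neq 0$ be the corresponding eigenspace, in the orthogonal eigenspace decomposition used in the proof of Theorem~\ref{eigenvalue}. The goal is to show that every $X\in\mathfrak{l}_1$ lies in the center of $\mu$, i.e.\ $L_X^\mu=R_X^\mu=0$; once this is known, the minimality of the eigenvalue $c_\mu$ of $\textnormal{M}_\mu$ on $\mathfrak{l}_1$ will clash with the manifest nonnegativity of $\langle\textnormal{M}_\mu X,X\rangle$ coming from~(\ref{Mformula}).

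First I would reproduce, for $X\in\mathfrak{l}_1$, the trace computation already carried out in Theorem~\ref{eigenvalue}: since $c_1 L_X^\mu=[D_\mu,L_X^\mu]=[\textnormal{M}_\mu,L_X^\mu]$ and $c_1=0$, cyclicity of the trace gives $\operatorname{tr}\textnormal{M}_\mu[L_X^\mu,(L_X^\mu)^*]=0$, and likewise $\operatorname{tr}\textnormal{M}_\mu[R_X^\mu,(R_X^\mu)^*]=0$. Because $\mu\in S_n$ is symmetric, both $L_X^\mu$ and $R_X^\mu$ are derivations, so Corollary~\ref{MD}(ii) forces $(L_X^\mu)^*,(R_X^\mu)^*\in\textnormal{Der}(\mu)$ as well; this is exactly the step where the symmetric (two-sided) hypothesis is essential. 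Now comes the crucial point: for any derivation $A$ one has $[A,L_Y^\mu]=L_{AY}^\mu$ (and, symmetrically, $[A,R_Y^\mu]=R_{AY}^\mu$). Applying this with the derivation $A=(L_X^\mu)^*$ and $Y=X$, the Hermitian operator
\[
[L_X^\mu,(L_X^\mu)^*]=-[(L_X^\mu)^*,L_X^\mu]=-L^\mu_{(L_X^\mu)^*X}
\]
is itself a left multiplication operator. Since $\mu$ is nilpotent, every left multiplication is a nilpotent operator; but a Hermitian nilpotent operator vanishes, so $[L_X^\mu,(L_X^\mu)^*]=0$. Hence $L_X^\mu$ is normal and nilpotent, and therefore $L_X^\mu=0$. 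The identical argument with right multiplications yields $R_X^\mu=0$.

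Consequently $\mathfrak{l}_1$ is contained in the center of $\mu$. Choosing $0\neq X\in\mathfrak{l}_1$ and substituting $L_X^\mu=R_X^\mu=0$ into~(\ref{Mformula}) gives $\langle\textnormal{M}_\mu X,X\rangle=2\sum_{i,j}|\langle\mu(X_i,X_j),X\rangle|^2\geq 0$, whereas $\textnormal{M}_\mu X=(c_\mu+c_1)X=c_\mu X$ with $c_\mu<0$ by Theorem~\ref{MID}(i), so $\langle\textnormal{M}_\mu X,X\rangle<0$, a contradiction. Thus $c_1>0$ and $D_\mu$ is positive definite. For the concluding clause I would combine this positivity with the integrality of Theorem~\ref{eigenvalue}: after rescaling, the eigenvalues of $D_\mu$ are positive integers $k_1<\cdots<k_r$, and since $D_\mu\in\textnormal{Der}(\mu)$ its eigenspaces satisfy $\mu(\mathfrak{l}_i,\mathfrak{l}_j)\subseteq\mathfrak{l}_{k_i+k_j}$, which is precisely an $\mathbb{N}$-grading. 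The main obstacle, and the heart of the argument, is the middle step: recognizing that the commutator $[L_X^\mu,(L_X^\mu)^*]$ is again an inner (left) multiplication, so that nilpotency of $\mu$ can be brought to bear on an otherwise rigid semisimple object; everything else is bookkeeping with the trace identities already established in Theorem~\ref{eigenvalue} and Corollary~\ref{MD}.
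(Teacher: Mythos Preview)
Your argument is correct, and the overall architecture---reduce to showing $L_X^\mu=R_X^\mu=0$ for $X\in\ker D_\mu$, then invoke the center contradiction from Theorem~\ref{eigenvalue}---matches the paper. The difference lies in the middle step. The paper sets up the orthogonal decomposition $\mathfrak{l}=\mathfrak{n}_1\oplus\cdots\oplus\mathfrak{n}_p$ adapted to the lower central series, observes that the derivation $(L_X^\mu)^*$ preserves each $\mathfrak{n}_i$ while $L_X^\mu$ strictly shifts the index, and concludes $\operatorname{tr} L_X^\mu(L_X^\mu)^*=0$ directly. Your route is more algebraic: from the derivation identity $[A,L_Y^\mu]=L_{AY}^\mu$ you recognize $[L_X^\mu,(L_X^\mu)^*]$ as a left multiplication, hence nilpotent, hence zero since it is Hermitian; normality plus nilpotency of $L_X^\mu$ then kills it. This is a genuinely different and arguably cleaner mechanism: it bypasses the explicit orthogonal filtration and packages the nilpotency input into a single spectral observation. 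The paper's version, on the other hand, makes the grading visible from the outset, which ties in more transparently with the $\mathbb{N}$-graded conclusion.
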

\begin{proof}
 Indeed, assume that $0\ne X\in\mathbb{C}^{n}$ satisfies $D_\mu X=c_1X$, where $c_1$ is the smallest eigenvalue of $D_\mu$.  By Theorem~\ref{eigenvalue}, we know that $c_1\geq 0.$ Suppose that $c_1=0,$ then $\operatorname{tr}\textnormal{M}_\mu[L_X^{\mu},(L_X^{\mu})^{*}]=0,$ and $\operatorname{tr}\textnormal{M}_\mu[R_X^{\mu},(R_X^{\mu})^{*}]=0.$ Using Corollary~\ref{MD},  $(L_X^{\mu})^{*}$ and $(R_X^{\mu})^{*}$ are derivations of $\mu.$ Let $\mathfrak{l}$ be the symmetric Leibniz algebra $(\mathbb{C}^{n},\mu)$.
Consider the orthogonal decomposition of $\mathfrak{l}$
\begin{align*}
\mathfrak{l}=\mathfrak{n}_1\oplus\mathfrak{n}_2\oplus\cdots\oplus\mathfrak{n}_p,~~ p\geq 2,
\end{align*}
where  $\mu (\mathfrak{l},\mathfrak{l})=\mathfrak{n}_2\oplus\cdots\oplus\mathfrak{n}_p,$ $\mu(\mathfrak{l},\mu (\mathfrak{l},\mathfrak{l}))=\mathfrak{l}_3\oplus\cdots\oplus\mathfrak{l}_p,\cdots.$ Since  $(L_X^{\mu})^{*}$ is  a derivation of $\mu,$ then $(L_X^{\mu})^{*}$ necessarily leaves each $\mathfrak{l}_i$ invariant. Note that $L_X^{\mu}(\mathfrak{l}_i)\subset\mathfrak{l}_{i+1}$ for each $i$, then
 $\operatorname{tr} L_X^{\mu}(L_X^{\mu})^{*}=0$,  and consequently,  $L_X^{\mu}=0.$ Similarly, one concludes that $R_X^{\mu}=0.$ That is, $X$ lies in the center of $\mathfrak{l}$, which is a contradiction since in this case  we have $c_1\geq-c_\mu>0.$ So $D_\mu$ is positive definite.
\end{proof}

The positive argument in Theorem~\ref{positive} for real nilpotent Lie algebras  plays a fundamental  role in \cite{Lauret2010}.
\begin{remark}\label{n-e}
So far, it is still unclear for us  whether  the nonnegative  property  in Theorem~\ref{eigenvalue} and  positive property in Theorem~\ref{positive}   hold for  $F_{n}: L_{n} \rightarrow \mathbb{R}$ or not.  However, we have the following partial result. For an arbitrary critical point $[\mu]$ of $F_{n}: L_{n} \rightarrow \mathbb{R}$, consider
\begin{align*}
\mathfrak{l}=\mathfrak{l}_{-}\oplus\mathfrak{l}_0\oplus\mathfrak{l}_{+},
\end{align*}
 the direct sum of eigenspaces of $D_\mu$  with eigenvalue smaller than zero,  equal to zero and larger than zero, respectively. Then $R_X^{\mu}\notin \textnormal{Der}(\mu)$ for any $0\ne X\in \mathfrak{l}_{-},$ which in turn is equivalent to $[R_X^{\mu},(R_X^{\mu})^{*}]\ne0$, i.e., not normal.  Indeed, assume that $R_X^{\mu}\in \textnormal{Der}(\mu)$  (or $[R_X^{\mu},(R_X^{\mu})^{*}]=0$) for some $0\ne X\in \mathfrak{l}_{-}$, then  by the proof of Theorem~\ref{eigenvalue}, we see that $X$ necessarily lies in the center of $\mu$, which  contradicts $0\ne X\in \mathfrak{l}_{-}$.
\end{remark}

\subsection{The  minima and  maxima  of $F_{n}: L_{n} \rightarrow \mathbb{R}$} Following  from \cite{Lauret03}, we   introduce  the notion of the type of a critical point.
\begin{Definition}
The data set $(k_1<k_2<\cdots<k_r;d_1,d_2,\cdots,d_r)$ in \textnormal{Theorem}~\textnormal{\ref{eigenvalue}}  is called the
type of the critical point $[\mu].$
\end{Definition}

To study the  minima and  maxima  of $F_{n}: L_{n} \rightarrow \mathbb{R}$,  we recall two simple but useful results as follows.

\begin{Lemma}[\cite{ZCL}]\label{CV}
Let $[\mu]\in\mathbb{P}V_n$ be a critical point of $F_n$ with type $\alpha=(k_1<k_2<\cdots<k_r;d_1,d_2,\cdots,d_r).$ Then we have
\begin{enumerate}
\item [(i)] If $\alpha=(0;n)$, then  $F_n ([\mu])=\frac{4}{n}.$
\item [(ii)] If $\alpha\ne(0;n)$, then  $F_n([\mu])=4\left(n-\frac{(k_1d_1+k_2d_2+\cdots+k_rd_r)^{2}}{(k_1^{2}d_1+k_2^{2}d_2+\cdots+k_r^{2}d_r)}\right)^{-1}.$
\end{enumerate}
\end{Lemma}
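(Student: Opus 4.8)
The plan is to reduce $F_n([\mu])$ to a rational expression in the trace data of $\textnormal{M}_\mu$ and then substitute the eigenvalue type. By Lemma~\ref{Mm} we have $m([\mu])=\textnormal{M}_\mu/\|\mu\|^2$, and since $\textnormal{M}_\mu\in\textnormal{i}\mathfrak{u}(n)$ is Hermitian, the real inner product \eqref{product} gives
\begin{align*}
F_n([\mu])=(m([\mu]),m([\mu]))=\frac{\operatorname{tr}\textnormal{M}_\mu^2}{\|\mu\|^4}.
\end{align*}
Using $\operatorname{tr}\textnormal{M}_\mu=-2\|\mu\|^2$ from Remark~\ref{measures} to replace $\|\mu\|^2$ by $-\tfrac12\operatorname{tr}\textnormal{M}_\mu$, the whole quantity becomes expressible through $\operatorname{tr}\textnormal{M}_\mu$ and $\operatorname{tr}\textnormal{M}_\mu^2$ alone, namely $F_n([\mu])=4\operatorname{tr}\textnormal{M}_\mu^2/(\operatorname{tr}\textnormal{M}_\mu)^2$.

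First I would extract the one nontrivial relation among these traces. Writing $\textnormal{M}_\mu=c_\mu I+D_\mu$ with $D_\mu\in\textnormal{Der}(\mu)$ Hermitian, Corollary~\ref{MD}(i) yields $\operatorname{tr}\textnormal{M}_\mu D_\mu=0$, that is $c_\mu\operatorname{tr}D_\mu+\operatorname{tr}D_\mu^2=0$. Setting $T_1=\operatorname{tr}D_\mu$ and $T_2=\operatorname{tr}D_\mu^2$, I expand $\operatorname{tr}\textnormal{M}_\mu=nc_\mu+T_1$ and $\operatorname{tr}\textnormal{M}_\mu^2=nc_\mu^2+2c_\mu T_1+T_2$, and use the relation $c_\mu T_1=-T_2$ to cancel the cross term, leaving $\operatorname{tr}\textnormal{M}_\mu^2=nc_\mu^2-T_2$. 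Thus
\begin{align*}
F_n([\mu])=\frac{4\,(nc_\mu^2-T_2)}{(nc_\mu+T_1)^2}.
\end{align*}

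For case (i) the type $(0;n)$ forces $D_\mu=0$, hence $T_1=T_2=0$ and $\textnormal{M}_\mu=c_\mu I$, and the displayed formula collapses to $4/n$. For case (ii), $\alpha\ne(0;n)$ means $D_\mu\ne0$, so $T_2>0$; the relation $c_\mu T_1=-T_2$ then forces $T_1\ne0$ and $c_\mu=-T_2/T_1$. Substituting and simplifying the resulting rational function gives
\begin{align*}
F_n([\mu])=\frac{4T_2}{nT_2-T_1^2}=4\Bigl(n-\frac{T_1^2}{T_2}\Bigr)^{-1},
\end{align*}
where $nT_2-T_1^2>0$ by Cauchy--Schwarz together with $r\ge2$ distinct eigenvalues. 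Finally, if $c>0$ scales the eigenvalues of $D_\mu$ to the integers $k_i$ as in Theorem~\ref{eigenvalue}, then $T_1=\tfrac1c\sum_i d_ik_i$ and $T_2=\tfrac1{c^2}\sum_i d_ik_i^2$, so the factor $c$ cancels in $T_1^2/T_2$ and one obtains precisely the stated expression in $k_1,\dots,k_r;d_1,\dots,d_r$.

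The argument is essentially elementary linear algebra once the two structural inputs are in hand, namely the trace identity $\operatorname{tr}\textnormal{M}_\mu=-2\|\mu\|^2$ (Remark~\ref{measures}) and the orthogonality $\operatorname{tr}\textnormal{M}_\mu D_\mu=0$ (Corollary~\ref{MD}(i)), both consequences of the critical-point characterization in Theorem~\ref{MID}. I do not expect any genuinely hard step; the only point requiring care, and the place where the dichotomy in the statement originates, is the degenerate possibility $T_1=0$ with $D_\mu\ne0$. I would rule this out exactly as above, since $c_\mu T_1=-T_2$ together with $T_2>0$ is incompatible with $T_1=0$, which is what cleanly separates case (i) from case (ii).
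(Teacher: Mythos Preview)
Your proof is correct. The paper does not supply its own argument for this lemma; it simply imports the result from \cite{ZCL}, so there is nothing to compare against line by line. Your derivation is the natural one: reduce $F_n$ to $4\operatorname{tr}\textnormal{M}_\mu^2/(\operatorname{tr}\textnormal{M}_\mu)^2$ via Lemma~\ref{Mm} and Remark~\ref{measures}, then exploit the orthogonality $\operatorname{tr}\textnormal{M}_\mu D_\mu=0$ from Corollary~\ref{MD}(i) to express everything in $T_1,T_2$. One small point you leave implicit when invoking Cauchy--Schwarz is why $\alpha\ne(0;n)$ forces $r\ge2$: a single nonzero eigenvalue would make $D_\mu$ a nonzero scalar multiple of $I$, but $I.\mu=-\mu\ne0$, so $I\notin\textnormal{Der}(\mu)$. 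With that observation your positivity claim $nT_2-T_1^2>0$ is fully justified, and the proof is complete.
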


\begin{Lemma}\label{4/n}
Assume $[\mu]\in\mathbb{P}V_n,$ then $[\mu]$ is a critical point of $F_n:\mathbb{P}V_n\rightarrow\mathbb{R}$ with type $(0;n)$ if and only if $F_n([\mu])=\frac{4}{n}.$ Moreover, $\frac{4}{n}$ is the  minimum  value of $F_n:\mathbb{P}V_n\rightarrow\mathbb{R}.$
\end{Lemma}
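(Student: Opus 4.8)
The plan is to deduce the whole statement from three facts already in hand: the value formula of Lemma~\ref{CV}, the variance identity recorded in Remark~\ref{measures}, and the critical-point criterion of Theorem~\ref{MID}. The observation tying these together is that, by the definition of type preceding Lemma~\ref{CV}, a critical point $[\mu]$ has type $(0;n)$ precisely when its derivation part vanishes, i.e. $D_{\mu}=0$; since $\textnormal{M}_{\mu}=c_{\mu}I+D_{\mu}$ with $D_{\mu}$ Hermitian, this is in turn equivalent to $\textnormal{M}_{\mu}$ being a real scalar multiple of the identity. So throughout I would work with the condition ``$\textnormal{M}_{\mu}\in\mathbb{R}I$''.

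For the forward implication of the equivalence there is essentially nothing to do: if $[\mu]$ is a critical point of type $(0;n)$, then part (i) of Lemma~\ref{CV} gives $F_{n}([\mu])=\frac{4}{n}$ at once. For the converse I would start from the identity in Remark~\ref{measures},
\begin{align*}
\Big\|m([\mu])-\tfrac{\operatorname{tr}m([\mu])}{n}I\Big\|^{2}=F_{n}([\mu])-\tfrac{4}{n},
\end{align*}
whose right-hand side is thus nonnegative for every $[\mu]$. Assuming $F_{n}([\mu])=\frac{4}{n}$ forces the left-hand side to vanish, so $m([\mu])=\tfrac{\operatorname{tr}m([\mu])}{n}I$ is scalar; by Lemma~\ref{Mm} we have $m([\mu])=\textnormal{M}_{\mu}/\|\mu\|^{2}$, whence $\textnormal{M}_{\mu}=cI$ for some $c\in\mathbb{R}$. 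Writing $\textnormal{M}_{\mu}=cI+0$ with $0\in\textnormal{Der}(\mu)$ and invoking the implication (3)$\Rightarrow$(1) of Theorem~\ref{MID}, I conclude that $[\mu]$ is a critical point with $D_{\mu}=0$, i.e. of type $(0;n)$. This settles the equivalence.

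It remains to show $\frac{4}{n}$ is the minimum value. The lower bound $F_{n}([\mu])\ge\frac{4}{n}$ for all $[\mu]\in\mathbb{P}V_{n}$ is exactly the nonnegativity of the right-hand side of the displayed identity, so the only remaining point is attainment. Here I would exhibit a single explicit algebra reaching the bound: on $(\mathbb{C}^{n},\langle\cdot,\cdot\rangle)$ with orthonormal basis $\{X_{1},\dots,X_{n}\}$, take $\mu_{0}(X_{i},X_{j})=\delta_{ij}X_{i}$ (the product of $n$ copies of the one-dimensional algebra). A short computation gives $L^{\mu_{0}}_{X_{i}}=R^{\mu_{0}}_{X_{i}}=E_{ii}$, so by (\ref{M}) one finds $\textnormal{M}_{\mu_{0}}=2I-2I-2I=-2I$, which is scalar; by the equivalence just proved, $F_{n}([\mu_{0}])=\frac{4}{n}$. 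Note that $\mu_{0}$ need not be Leibniz, which is harmless since the minimum here is taken over all of $\mathbb{P}V_{n}$. Combining the lower bound with this example yields that the minimum value equals $\frac{4}{n}$.

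The whole argument is mostly bookkeeping once Remark~\ref{measures} and Theorem~\ref{MID} are available, so the main obstacle I anticipate is the attainment step: one must produce a concrete algebra with scalar $\textnormal{M}_{\mu}$, equivalently verify that a type-$(0;n)$ critical point genuinely exists in $\mathbb{P}V_{n}$. The product algebra $\mu_{0}$ handles this cleanly, but I would take care to double-check the normalization $\operatorname{tr}\textnormal{M}_{\mu_{0}}=-2\|\mu_{0}\|^{2}$ (here $\|\mu_{0}\|^{2}=n$) against the general identity $\operatorname{tr}\textnormal{M}_{\mu}=-2\|\mu\|^{2}$ as a consistency check on the computation of $L^{\mu_{0}}_{X_{i}}$ and $R^{\mu_{0}}_{X_{i}}$.
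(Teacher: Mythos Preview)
Your proof is correct and follows the same route as the paper, which simply cites Remark~\ref{measures}: both the equivalence and the lower bound $F_n\ge 4/n$ drop out of the variance identity $\|m([\mu])-\tfrac{\operatorname{tr}m([\mu])}{n}I\|^2=F_n([\mu])-\tfrac{4}{n}$ together with Theorem~\ref{MID}. Your argument is in fact more complete than the paper's one-line proof, since you explicitly verify attainment via the diagonal algebra $\mu_0$; the paper leaves this implicit.
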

\begin{proof}
Using Remark~\ref{measures}
\end{proof}

The following theorem shows that  even in the frame of Leibniz algebras,  the semisimple Lie algebras are still the only  critical points of $F_{n}: L_{n} \rightarrow \mathbb{R}$ attaining  the minimum value.
\begin{theorem}\label{x}
Assume that there exists a  semisimple Lie algebra of dimension $n$. Then $F_{n}: L_{n} \rightarrow \mathbb{R}$ attains its minimum value at a point $[\lambda] \in \textnormal{GL}(n).[\mu]$ if and only if $\mu$ is a semisimple Lie algebra. In such a case, $F_{n}([\lambda])=\frac{4}{n}.$
\end{theorem}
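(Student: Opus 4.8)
The plan is to locate the global minimum of $F_n$ and to show that, among Leibniz algebras, it is realized exactly on the orbits of semisimple Lie algebras. By Lemma~\ref{4/n} the value $\tfrac{4}{n}$ is the minimum of $F_n$ on all of $\mathbb{P}V_n$, and a critical point realizes it precisely when it has type $(0;n)$, i.e.\ when $\textnormal{M}_\mu=c_\mu I$ with $c_\mu<0$ (equivalently $D_\mu=0$). Since $L_n\subseteq\mathbb{P}V_n$, the minimum of $F_n|_{L_n}$ is at least $\tfrac{4}{n}$, and the hypothesis that an $n$-dimensional semisimple Lie algebra exists is used precisely to guarantee that this value is actually attained inside $L_n$. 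Thus the statement reduces to the equivalence: a point of $\textnormal{GL}(n).[\mu]$ has type $(0;n)$ if and only if $\mu$ is a semisimple Lie algebra.

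For the ``if'' direction I would take $\mu$ semisimple Lie and choose the Hermitian inner product making an orthonormal basis $\{X_i\}$ out of a compact real form $\mathfrak{u}$ (equivalently, the metric induced by $-B$, with $B$ the Killing form). Since each $X_i\in\mathfrak{u}$, the operator $\operatorname{ad}X_i$ is skew-Hermitian, so $L^\mu_{X_i}=\operatorname{ad}X_i$ and $R^\mu_{X_i}=-\operatorname{ad}X_i$ give $L^\mu_{X_i}(L^\mu_{X_i})^{*}=(L^\mu_{X_i})^{*}L^\mu_{X_i}=(R^\mu_{X_i})^{*}R^\mu_{X_i}=-(\operatorname{ad}X_i)^2$. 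Substituting into (\ref{M}) collapses $\textnormal{M}_\mu$ to $-2\sum_i(\operatorname{ad}X_i)^{*}\operatorname{ad}X_i$, which is the Casimir operator of the adjoint representation relative to $B$ and equals the identity on each simple ideal, hence $\textnormal{M}_\mu=-2I$ uniformly. Therefore $[\mu]$ is a critical point of type $(0;n)$ and $F_n([\mu])=\tfrac{4}{n}$ by Lemma~\ref{4/n}; in particular the minimum is attained on $\textnormal{GL}(n).[\mu]$.

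For the ``only if'' direction, suppose the minimum is attained at $[\lambda]\in\textnormal{GL}(n).[\mu]$, so $F_n([\lambda])=\tfrac{4}{n}$ and, by Lemma~\ref{4/n}, $\textnormal{M}_\lambda=c_\lambda I$ with $c_\lambda<0$ is negative definite. I claim $\textnormal{Rad}(\lambda)=0$. If not, then $\textnormal{Rad}(\lambda)$ contains a nonzero \emph{abelian} ideal $\mathfrak{a}$ of $\lambda$ (the last nonzero term of the derived series of the radical, or a minimal ideal inside it), which must be proper since $\lambda\ne0$. Write $\mathbb{C}^n=\mathfrak{a}\oplus\mathfrak{a}^{\perp}$ with $p=\dim\mathfrak{a}\ge1$, $q=\dim\mathfrak{a}^{\perp}\ge1$, and let $A\in\textnormal{i}\mathfrak{u}(n)$ act as $q$ on $\mathfrak{a}$ and as $-p$ on $\mathfrak{a}^{\perp}$, so $\operatorname{tr}A=0$. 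By Lemma~\ref{Mm}, $2\langle A.\lambda,\lambda\rangle=(\textnormal{M}_\lambda,A)=c_\lambda\operatorname{tr}A=0$. Expanding $\langle A.\lambda,\lambda\rangle$ via (\ref{gaction}) in a basis adapted to $\mathfrak{a}\oplus\mathfrak{a}^{\perp}$ weights each structure constant $c_{ij}^{k}$ by the $A$-eigenvalue $w_k-w_i-w_j$; since $\mathfrak{a}$ is a two-sided abelian ideal the only surviving terms are $\lambda(\mathfrak{a}^{\perp},\mathfrak{a}^{\perp})\to\mathfrak{a}$ (weight $q+2p$), $\lambda(\mathfrak{a}^{\perp},\mathfrak{a}^{\perp})\to\mathfrak{a}^{\perp}$, $\lambda(\mathfrak{a},\mathfrak{a}^{\perp})\to\mathfrak{a}$, and $\lambda(\mathfrak{a}^{\perp},\mathfrak{a})\to\mathfrak{a}$ (each of weight $p$), all with strictly positive coefficients. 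Hence $\langle A.\lambda,\lambda\rangle\ge0$, and equality forces every one of these brackets to vanish; together with $\lambda(\mathfrak{a},\mathfrak{a})=0$ this gives $\lambda=0$, contradicting $\textnormal{M}_\lambda\ne0$. Therefore $\textnormal{Rad}(\lambda)=0$, and the Levi decomposition (Theorem~\ref{levi}) yields $\lambda=\mathcal S$, a semisimple Lie algebra; as $\mu\cong\lambda$, $\mu$ is semisimple Lie, and $F_n([\lambda])=\tfrac{4}{n}$.

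The hard part will be the two structural inputs rather than the sign bookkeeping. In the forward direction one must justify over $\mathbb{C}$ that $\operatorname{ad}X$ is skew-Hermitian for $X\in\mathfrak{u}$ and that the adjoint Casimir equals the identity \emph{uniformly} across all simple factors, so that no derivation correction $D_\mu$ survives and the type is genuinely $(0;n)$. In the backward direction the crux is the passage ``nonzero radical $\Rightarrow$ nonzero abelian two-sided ideal of $\lambda$'', which relies on the Leibniz facts that products and derived terms of ideals are again ideals, and the passage ``$\textnormal{Rad}(\lambda)=0\Rightarrow\lambda$ is Lie'', which rests on Theorem~\ref{levi}; the weight computation itself is elementary once $\mathfrak{a}$ is known to be a two-sided abelian ideal, which is exactly what makes the offending term $\lambda(\mathfrak{a},\mathfrak{a})$ drop out.
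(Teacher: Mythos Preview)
Your argument is correct in both directions, and the ``only if'' direction is genuinely different from the paper's.

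For the forward direction the paper simply cites \cite[Thm.~4.3]{Lauret03}; your explicit computation via a compact real form and the adjoint Casimir is essentially that argument written out, and it is sound (the key point that the Casimir w.r.t.\ $B$ acts as the identity on each simple summand, hence uniformly, is exactly what makes $D_\mu=0$).

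For the converse the paper proceeds by an iterated orthogonal decomposition $\mathfrak{l}=\mathfrak{h}\oplus\mathfrak{a}\oplus\mathfrak{v}\oplus\mathfrak{z}_\lambda$ (Levi piece, abelian complement in the radical, complement of the center in $\mathfrak{n}_\lambda=\lambda(\mathfrak{s},\mathfrak{s})$, center of $\mathfrak{n}_\lambda$) and computes $\sum_k\langle \textnormal{M}_\lambda Z_k,Z_k\rangle$ and then $\sum_k\langle \textnormal{M}_\lambda A_k,A_k\rangle$ directly from (\ref{Mformula}), using telescoping cancellations to force first $\mathfrak{z}_\lambda=0$, hence $\mathfrak{n}_\lambda=0$, and then $\mathfrak{s}=0$. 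Your route is shorter: a single nonzero abelian two-sided ideal $\mathfrak{a}$ and the traceless Hermitian $A=qI_{\mathfrak a}\oplus(-p)I_{\mathfrak a^\perp}$ turn the condition $\textnormal{M}_\lambda=c_\lambda I$ into $\langle A.\lambda,\lambda\rangle=0$, and the ideal/abelian constraints make every surviving weight $w_k-w_i-w_j$ strictly positive, killing $\lambda$ outright. What your approach buys is economy and a single structural input (existence of a nonzero abelian ideal in $\operatorname{Rad}(\lambda)$, which your derived-series remark justifies in the Leibniz setting); what the paper's approach buys is a more hands-on picture of \emph{where} the negativity of $\textnormal{M}_\lambda$ fails, layer by layer, which feeds into the later structural results (Theorem~\ref{structure}). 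Both are valid proofs of the same statement.
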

\begin{proof}
Assume that  $\mu$ is  a complex semisimple Lie algebra,  then it follows from \cite[Theorem 4.3]{Lauret03} that $F_{n}([\lambda])=\frac{4}{n}$ for some  $[\lambda] \in \textnormal{GL}(n).[\mu]$.

Conversely,  assume $F_{n}: L_{n} \rightarrow \mathbb{R}$ attains its minimum value at a point $[\lambda] \in \textnormal{GL}(n).[\mu].$  Then by hypothesis, there exists a semisimple Lie algebra of dimension $n$. The first part of the proof and Lemma~\ref{4/n} imply that $\textnormal{M}_\lambda=c_\lambda I$ with $c_\lambda<0.$
To prove $\mu$ is semisimple, it suffices to show   that $\mathfrak{l}=(\lambda,\mathbb{C}^{n})$ is semisimple. Consider the following   orthogonal decompositions: (i) $\mathfrak{l}=\mathfrak{h}\oplus \mathfrak{s}$, where $\mathfrak{s}$ is the radical of $\lambda;$  (ii)  $\mathfrak{s}=\mathfrak{a}\oplus\mathfrak{n}_\lambda$,  where $\mathfrak{n}_\lambda=\lambda(\mathfrak{s},\mathfrak{s})$  is a  nilpotent ideal of $\mathfrak{l}$;  (iii) $\mathfrak{n}_\lambda=\mathfrak{v}\oplus\mathfrak{z}_\lambda$, where  $\mathfrak{z}_\lambda=\{Z\in \mathfrak{n}_\lambda:\lambda(Z,\mathfrak{n}_\lambda)=\lambda(\mathfrak{n}_\lambda,Z)=0\}$ is the center of $\mathfrak{n}_\lambda$. Clearly, $\mathfrak{z}_\lambda$ is a   ideal of $\mathfrak{l}$.
We have $\mathfrak{l}=\mathfrak{h}\oplus\mathfrak{a}\oplus\mathfrak{v}\oplus\mathfrak{z}_\lambda.$
Suppose that $\mathfrak{z}_\lambda\ne0.$ Let $\{H_i\},\{A_i\},\{V_i\},\{Z_i\}$  be an orthonormal basis of $\mathfrak{h},\mathfrak{a},\mathfrak{v},$ and $\mathfrak{z}_\lambda,$ respectively. Put $\{X_i\}=\{H_i\}\cup\{A_i\}\cup\{V_i\}\cup\{Z_i\}.$ For any $0\ne Z\in \mathfrak{z}_\lambda$, by hypothesis we have
\begin{align*}
0>\langle\textnormal{M}_{\lambda} Z, Z\rangle
=&2 \sum_{i j}|\langle\lambda(X_{i}, X_{j}), Z\rangle|^{2}-2\sum_{ij}|\langle\lambda(Z, X_{i}), X_{j}\rangle|^{2}
-2\sum_{ij}|\langle\lambda(X_{i},Z), X_{j}\rangle|^{2} \\
=&2 \sum_{ij}\left\{|\langle\lambda(Z_i, H_{j}), Z\rangle|^{2}+|\langle\lambda(H_{i},Z_j), Z\rangle|^{2}
+|\langle\lambda(Z_i, A_{j}), Z\rangle|^{2}+|\langle\lambda(A_{i},Z_j), Z\rangle|^{2}\right\}+\alpha{(Z)}\\
&-2\sum_{ij}\left\{|\langle\lambda(Z, H_{i}), Z_{j}\rangle|^{2}+|\langle\lambda(Z, A_{i}), Z_{j}\rangle|^{2}\right\}
-2\sum_{ij}\left\{|\langle\lambda(H_{i},Z), Z_{j}\rangle|^{2}+|\langle\lambda(A_{i},Z), Z_{j}\rangle|^{2}\right\},
\end{align*}
where $\alpha{(Z)}=2\sum_{ij}|\langle\lambda(Y_{i}, Y_{j}), Z\rangle|^{2}\geq 0,$ $\{Y_i\}=\{H_i\}\cup\{A_i\}\cup\{V_i\}.$ This implies
\begin{align*}
0>\sum_{k}\langle\textnormal{M}_{\lambda} Z_k, Z_k\rangle=\sum_{k}\alpha{(Z_k)}\geq 0,
\end{align*}
which is a contradiction. So $\mathfrak{z}_\lambda=0$,  and consequently, $\mathfrak{n}_\lambda=\lambda(\mathfrak{s},\mathfrak{s})=0.$

Suppose that $\mathfrak{s}\ne0.$   Let $\{H_i\},\{A_i\}$  be an orthonormal basis of $\mathfrak{h},\mathfrak{s},$  respectively. For any $0\ne A\in \mathfrak{s}$, we have
\begin{align*}
0>\langle\textnormal{M}_{\lambda} A, A\rangle
=&2\sum_{ij}\left\{|\langle\lambda(H_{i},A_j), A\rangle|^{2}+|\langle\lambda(A_{i},H_j), A\rangle|^{2}\right\}+\beta{(A)}\\
&-2\sum_{ij}|\langle\lambda(A,H_{i}), A_j\rangle|^{2}-2\sum_{ij}|\langle\lambda(H_{i},A), A_j\rangle|^{2}
\end{align*}
where $\beta{(A)}=2\sum_{ij}|\langle\lambda(H_i, H_{j}), A\rangle|^{2}\geq 0.$  This implies
\begin{align*}
0>\sum_{k}\langle\textnormal{M}_{\lambda} A_k, A_k\rangle=\sum_{k}\beta{(A_k)}\geq 0,
\end{align*}
which is a contradiction. So $\mathfrak{s}=0$. Therefore $\lambda$ is a  semisimple Lie algebra.
\end{proof}


\begin{remark}\label{negativeDefinite}
By the proof of Theorem~\ref{x}, we know that if  $[\mu]\in L_n$ for which
there exists  $[\lambda] \in \textnormal{GL}(n).[\mu]$ such that $\textnormal{M}_{\lambda}$ is  negative definite, then $\mu $ is a semisimple Lie algebra.
\end{remark}

The next  theorem shows  that   in the frame of Leibniz algebras,    the maximum value of $F_{n}: L_{n} \rightarrow \mathbb{R}$ is    attained  at  symmetric Leibniz algebras that are  non-Lie.
\begin{theorem}\label{max}
The functional $F_{n}: L_{n} \rightarrow \mathbb{R}$ attains its maximal value at a point $[\mu]\in L_n,$ $n\ge2$ if and only if $\mu$ is isomorphic to the direct sum of the two-dimensional non-Lie symmetric Leibniz algebra with the trivial algebra.   In such a case, $F_{n}([\mu])=20.$
\end{theorem}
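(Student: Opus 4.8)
The plan is to reduce the optimization to critical points and then control $F_n$ by two elementary ingredients: a uniform spectral bound on $\textnormal{M}_\mu$, together with the grading of $\mu$ induced by the Hermitian derivation $D_\mu$. Throughout I use that Lemma~\ref{Mm} gives $m([\mu])=\textnormal{M}_\mu/\|\mu\|^2$, hence $F_n([\mu])=\operatorname{tr}\textnormal{M}_\mu^2/\|\mu\|^4$ since $\textnormal{M}_\mu$ is Hermitian. For the candidate, let $\mu=\mu_0\oplus 0$ on $\mathbb{C}^n=\langle a,b\rangle\oplus\mathbb{C}^{n-2}$, where $\mu_0$ is the two-dimensional non-Lie symmetric Leibniz algebra with single nonzero product $\mu(a,a)=b$. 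Computing from \eqref{M} in an adapted orthonormal basis gives $\textnormal{M}_\mu=\operatorname{diag}(-4,2,0,\dots,0)$, so $\|\mu\|^2=1$, $\operatorname{tr}\textnormal{M}_\mu=-2$, and $\textnormal{M}_\mu=c_\mu I+D_\mu$ with $c_\mu=-10$ and $D_\mu=\operatorname{diag}(6,12,10,\dots,10)\in\textnormal{Der}(\mu)$; thus $[\mu]$ is a critical point by Theorem~\ref{MID} (type $(3<5<6;1,n-2,1)$), and $F_n([\mu])=\operatorname{tr}\textnormal{M}_\mu^2/\|\mu\|^4=20$. In particular $\max F_n\ge 20$.

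For the upper bound I first reduce to critical points. Since $L_n$ is a projective variety, $F_n$ attains its maximum over $L_n$ at some $[\mu_0]$; because $L_n$ is $\textnormal{GL}(n)$-invariant, the orbit $\textnormal{GL}(n).[\mu_0]$ is contained in $L_n$, so $[\mu_0]$ also maximizes $F_n$ over this smooth orbit and is therefore a critical point of $F_n|_{\textnormal{GL}(n).[\mu_0]}$. By the equivalence $(1)\Leftrightarrow(2)$ of Theorem~\ref{MID}, $[\mu_0]$ is a critical point of $F_n$ on $\mathbb{P}V_n$. Hence it suffices to prove $F_n([\mu])\le 20$ for every critical point $[\mu]\in L_n$.

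Now the two ingredients. Writing $A=\sum_i L^{\mu}_{X_i}(L^{\mu}_{X_i})^{*}$, $B=\sum_i (L^{\mu}_{X_i})^{*}L^{\mu}_{X_i}$, and $C=\sum_i (R^{\mu}_{X_i})^{*}R^{\mu}_{X_i}$, each operator is positive semidefinite with trace $\|\mu\|^2$, and $\textnormal{M}_\mu=2A-2B-2C$ by \eqref{M}. For any unit vector $v$ we get $\langle\textnormal{M}_\mu v,v\rangle\le 2\langle Av,v\rangle\le 2\operatorname{tr}A=2\|\mu\|^2$ and $\langle\textnormal{M}_\mu v,v\rangle\ge -2\langle (B+C)v,v\rangle\ge -2\operatorname{tr}(B+C)=-4\|\mu\|^2$, so every eigenvalue of $\textnormal{M}_\mu$ lies in $[-4\|\mu\|^2,\,2\|\mu\|^2]$. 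Next, $D_\mu\in\textnormal{Der}(\mu)$ is Hermitian, so $\mathbb{C}^n=\bigoplus_i\mathfrak{l}_i$ is the orthogonal sum of its eigenspaces with eigenvalues $a_i$, and the derivation identity forces $\mu(\mathfrak{l}_i,\mathfrak{l}_j)$ to lie in the eigenspace for $a_i+a_j$. Since $\mu\ne 0$, some product $\mu(\mathfrak{l}_i,\mathfrak{l}_j)\ne 0$, so $a_i+a_j=a_m$ is again an eigenvalue. Passing to the eigenvalues $\lambda_\bullet=c_\mu+a_\bullet$ of $\textnormal{M}_\mu=c_\mu I+D_\mu$, this relation reads $\lambda_i+\lambda_j=c_\mu+\lambda_m$. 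Bounding the left side below by $-8\|\mu\|^2$ and $\lambda_m$ above by $2\|\mu\|^2$ yields $c_\mu\ge -10\|\mu\|^2$; since $c_\mu=-\tfrac12\operatorname{tr}\textnormal{M}_\mu^2/\|\mu\|^2=-\tfrac12 F_n([\mu])\|\mu\|^2$ by Theorem~\ref{MID}(i), this is exactly $F_n([\mu])\le 20$.

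Finally the rigidity. If $F_n([\mu])=20$ then $c_\mu=-10\|\mu\|^2$ and both estimates above are equalities, forcing $\lambda_i=\lambda_j=-4\|\mu\|^2$ and $\lambda_m=2\|\mu\|^2$; tracing equality back through the spectral bounds forces $A=\|\mu\|^2 vv^{*}$ and $B=C=\|\mu\|^2 ww^{*}$ to be rank one with $w\perp v$, where $v,w$ are the extremal eigenvectors. Unwinding $A,B,C$ then gives $\mu(X,Y)\ne 0$ only when $X,Y\in\mathbb{C}w$, with $\mu(w,w)\in\mathbb{C}v$, so $\mu$ is the direct sum of $\langle w,v\rangle$ (the two-dimensional non-Lie symmetric Leibniz algebra) with a trivial factor, as claimed. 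I expect the main obstacle to be the careful bookkeeping in this rank-one rigidity step (recovering the full multiplication from the degeneracy of $A,B,C$) and the clean justification that the maximum over $L_n$ is realized on a distinguished orbit; by contrast the spectral bound and the grading inequality are short.
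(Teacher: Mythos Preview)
Your proof is correct and takes a genuinely different route from the paper.

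The paper argues via geometric invariant theory and an external classification: at a maximum $[\mu]$ the Ness theorem (Theorem~\ref{min_cri}) forces $F_n|_{\textnormal{GL}(n).[\mu]}$ to be constant, hence $\textnormal{GL}(n).[\mu]=\textnormal{U}(n).[\mu]$; this makes the degeneration level of $\mu$ equal to $1$, and the paper then invokes the classification of level-one Leibniz algebras from \cite{KO2013} (three isomorphism classes) and computes $F_n$ on each to find the maximum $20$. Your argument is entirely self-contained: the spectral inequality $-4\|\mu\|^2\le \lambda\le 2\|\mu\|^2$ for the eigenvalues of $\textnormal{M}_\mu$, combined with the grading identity $\lambda_i+\lambda_j=c_\mu+\lambda_m$ coming from $D_\mu\in\textnormal{Der}(\mu)$, immediately gives $c_\mu\ge -10\|\mu\|^2$, i.e.\ $F_n\le 20$ for every critical point. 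The equality analysis (rank-one degeneration of $A,B,C$) then pins down the algebra. This avoids both Ness's theorem and the outside classification, and in fact your upper bound and rigidity never use the Leibniz identity, so you have really shown that $20$ is the maximum of $F_n$ over \emph{all} critical points in $\mathbb{P}V_n$, a statement strictly stronger than the paper's. What the paper's approach buys is the structural interpretation of the maximizer as a degeneration-level-one algebra; what your approach buys is elementarity, self-containment, and a transparent reason for the number $20=2\cdot(4+4+2)$.

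Two small points worth tightening in a final write-up. First, in the rigidity step you should note explicitly that $\langle Av,v\rangle=\operatorname{tr}A$ for a positive semidefinite $A$ and a unit vector $v$ forces $A$ to be rank one with range $\mathbb{C}v$ (write $A=\sum_k\sigma_k u_ku_k^*$ and observe $\sum_k\sigma_k(1-|\langle v,u_k\rangle|^2)=0$); the analogous statement for $B,C$ at the eigenvalue $-4\|\mu\|^2$ then also shows this eigenspace is one-dimensional. Second, for $n=2$ the type is $(1<2;1,1)$ rather than $(3<5<6;1,0,1)$, but this does not affect the value $F_n([\mu])=20$.
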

\begin{proof}
Assume that $F_{n}: L_{n} \rightarrow \mathbb{R}$ attains its maximal value at a point $[\mu]\in L_n,$  $n\ge2.$ By Theorem~\ref{MID}, we know that $[\mu]$ is also a critical of   $F_{n}: \mathbb{P}V_{n} \rightarrow \mathbb{R}.$ Then it follows  Theorem~\ref{min_cri} that $F_{n}|_{\textnormal{GL}(n).[\mu]}$ also attains its minimum value at a point $[\mu]$ , consequently   $F_{n}|_{\textnormal{GL}.[\mu]}$ is a constant, so
\begin{align}\label{GLU}
\textnormal{GL}(n).[\mu]=\textnormal{U}(n).[\mu]
\end{align}
The  relation (\ref{GLU}) implies that  the only non-trivial degeneration  of $\mu$ is $0$ (\cite[Theorem 5.1]{Lauret2003}), consequently the  degeneration level of $\mu$ is $1$. By \cite{KO2013}, a  Leibniz algebra of  degeneration level $1$  is necessarily isomorphic to one of the following
\begin{enumerate}
\item [(i)]$\mu_{hy}$ is a Lie algebra: $\mu_{hy}(X_1,X_i)=X_i,~ i=2,\cdots,n;$
\item [(ii)]$\mu_{he}$ is a Lie algebra: $\mu_{he}(X_1,X_2)=X_3;$
\item [(iii)] $\mu_{sy}$ is a symmetric Leibniz  algebra: $\mu_{sy}(X_1,X_1)=X_2;$
\end{enumerate}
where $\{X_1,X_2,\cdots,X_n\}$ is a basis.
It is easy to see that  the critical point  $[\mu_{hy}]$ is of type $(0<1;1,n-1)$,
 $[\mu_{he}]$ is of type $(2<3<4;2,n-3,1)$ and  $[\mu_{sy}]$ is of type $(3<5<6;1,n-2,1)$. By Lemma~\ref{CV}, we know
\begin{align*}
F_{n}([\mu_{hy}])=4, \quad F_{n}([\mu_{he}])=12, \quad F_{n}([\mu_{sy}])=20.
\end{align*}
The theorem  therefore is proved.
\end{proof}

\subsection{The structure for the critical points of $F_{n}: S_{n} \rightarrow \mathbb{R}$} Note that the  maxima and minima of  the functional $F_{n}: L_{n} \rightarrow \mathbb{R}$ are actually attained at  symmetric Leibniz algebras.   In the sequel,   we characterize  the structure for the critical points of $F_{n}: S_{n} \rightarrow \mathbb{R}$ by    Theorem~\ref{eigenvalue}. These are  main results of this article.
\begin{theorem}\label{structure}
Let $[\mu]\in S_{n}$ be a critical point of $F_{n}: S_{n} \rightarrow \mathbb{R}$  with $\textnormal{M}_\mu=c_\mu I+D_\mu$ of type $(0<k_2<\cdots<k_r;d_1,d_2,\cdots,d_r)$ and consider
\begin{align}\label{ortho}
\mathfrak{l}=\mathfrak{l}_0\oplus\mathfrak{l}_{+},
\end{align}
 the direct sum of eigenspaces of $D_\mu$  with eigenvalues equal to zero,  and larger than zero, respectively. Then the following statements hold:
\begin{enumerate}
\item [(i)] $(L_A^{\mu})^{*},(R_A^{\mu})^{*}\in \textnormal{Der}(\mu)$ for any $A\in \mathfrak{l}_0.$
\item [(ii)]$\mathfrak{l}_0$ is a reductive Lie subalgebra, i.e., a direct sum of the center and a semisimple ideal.
\item [(iii)] $L_Z^{\mu},R_Z^{\mu}$ are  normal operators for any $Z\in \mathfrak{z}(\mathfrak{l}_0),$  where $\mathfrak{z}(\mathfrak{l}_0)$ denotes the center of $\mathfrak{l}_0.$
\item [(iv)] $\mathfrak{l}_{+}$ is the nilradical  of $\mu$, and  it corresponds to  a critical point of type $(k_2<\cdots<k_r;d_2,\cdots,d_r)$ for the functional  $F_{m}: S_{m} \rightarrow \mathbb{R}$, where $m=\dim \mathfrak{l}_{+}.$
\end{enumerate}
\end{theorem}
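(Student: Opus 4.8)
The plan is to reduce all four statements to the eigenspace decomposition of the Hermitian derivation $D_\mu$, the relation $\textnormal{M}_\mu=c_\mu I+D_\mu$ with $c_\mu<0$ (Theorem~\ref{MID}), and two structural facts about symmetric Leibniz algebras: that every $L_A^\mu,R_A^\mu$ is a derivation, and that squares lie in the two-sided annihilator. Throughout, for $A\in\mathfrak{l}_0$ the rule $[D,L^\mu_W]=L^\mu_{DW}$ gives $[D_\mu,L_A^\mu]=L^\mu_{D_\mu A}=0$ and $[D_\mu,R_A^\mu]=0$, so $L_A^\mu,R_A^\mu$—and, once (i) holds, their adjoints—commute with $D_\mu$ and preserve every eigenspace, in particular $\mathfrak{l}_0$ and $\mathfrak{l}_{+}$. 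For (i), since $[\textnormal{M}_\mu,L_A^\mu]=[D_\mu,L_A^\mu]=0$, cyclicity of the trace yields $\operatorname{tr}\textnormal{M}_\mu[L_A^\mu,(L_A^\mu)^*]=\operatorname{tr}[\textnormal{M}_\mu,L_A^\mu](L_A^\mu)^*=0$; as $L_A^\mu\in\textnormal{Der}(\mu)$, Corollary~\ref{MD}(ii) forces $(L_A^\mu)^*\in\textnormal{Der}(\mu)$, and the same computation for $R_A^\mu$ (a derivation because $\mu$ is right Leibniz too) gives $(R_A^\mu)^*\in\textnormal{Der}(\mu)$.

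For (ii), $\mathfrak{l}_0$ is a subalgebra because eigenvalues add and $0+0=0$. To see it is a Lie algebra I would show $\mu(A,A)=0$ for $A\in\mathfrak{l}_0$: specialising the two defining identities (set $x=y$ in the left one, $z=y$ in the right one) gives $\mu(\mu(x,x),z)=0=\mu(x,\mu(y,y))$, so $\mu(A,A)$ lies in the two-sided annihilator; but an annihilator element $Z\in\mathfrak{l}_0$ would satisfy $0\le\langle\textnormal{M}_\mu Z,Z\rangle=c_\mu\|Z\|^2$, forcing $Z=0$ (the argument already used for Theorem~\ref{eigenvalue}). Hence $\mathfrak{l}_0$ is anticommutative, and with the left Leibniz identity it is a Lie algebra. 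Reductivity then comes from the self-adjointness of (i): on $\mathfrak{l}_0$ one has $(\operatorname{ad}_A)^*=(L_A^\mu)^*|_{\mathfrak{l}_0}\in\textnormal{Der}(\mathfrak{l}_0)$, so for any characteristic ideal $\mathfrak{a}$ both $\operatorname{ad}_A$ and $(\operatorname{ad}_A)^*$ preserve $\mathfrak{a}$, whence $\mathfrak{a}^{\perp}$ is again an ideal; applying this to the radical and inducting along its derived series shows the radical is central, i.e. $\mathfrak{l}_0$ is reductive.

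The remaining parts rest on the identity, valid for $A\in\mathfrak{l}_0$ by (i) and the rule $[D,L^\mu_W]=L^\mu_{DW}$ with $D=(L_A^\mu)^*$,
\begin{align*}
[L_A^\mu,(L_A^\mu)^*]=-L^\mu_{(L_A^\mu)^*A},\qquad [R_A^\mu,(R_A^\mu)^*]=-R^\mu_{(R_A^\mu)^*A}.
\end{align*}
For (iii), let $Z\in\mathfrak{z}(\mathfrak{l}_0)$. Then $(L_Z^\mu)^*Z\in\mathfrak{l}_0$, and for every $Y\in\mathfrak{l}_0$, $\langle(L_Z^\mu)^*Z,Y\rangle=\langle Z,\mu(Z,Y)\rangle=0$ since $Z$ is central in $\mathfrak{l}_0$; thus $(L_Z^\mu)^*Z\in\mathfrak{l}_0\cap\mathfrak{l}_0^{\perp}=0$, so $[L_Z^\mu,(L_Z^\mu)^*]=0$, i.e. $L_Z^\mu$ is normal, and likewise $R_Z^\mu$.

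Finally, for (iv), $\mathfrak{l}_{+}$ is an ideal carrying the positive-definite grading derivation $D_\mu|_{\mathfrak{l}_{+}}$, hence nilpotent; it is the whole nilradical because $\textnormal{N}(\mu)$ is characteristic (so $D_\mu$-graded), while a nonzero element of $\textnormal{N}(\mu)\cap\mathfrak{l}_0\subseteq\mathfrak{z}(\mathfrak{l}_0)$ would act on $\mathfrak{l}_{+}$ by a nonzero normal—hence non-nilpotent—operator by (iii), contradicting nilpotency. For criticality I will compare $\textnormal{M}_{\mu_+}$, formed inside $\mathfrak{l}_{+}$ (writing $\mu_+:=\mu|_{\mathfrak{l}_{+}\times\mathfrak{l}_{+}}$), with $\textnormal{M}_\mu|_{\mathfrak{l}_{+}}$ in an adapted orthonormal basis $\{A_\alpha\}\cup\{E_a\}$ of $\mathfrak{l}_0\oplus\mathfrak{l}_{+}$: all terms of (\ref{Mformula}) involving $\mathfrak{l}_0$ assemble, via the displayed identity, into
\begin{align*}
2\sum_\alpha\langle[L_{A_\alpha}^\mu,(L_{A_\alpha}^\mu)^*]X,Y\rangle+2\sum_\alpha\langle[R_{A_\alpha}^\mu,(R_{A_\alpha}^\mu)^*]X,Y\rangle=-2\langle L_B^\mu X,Y\rangle-2\langle R_{B'}^\mu X,Y\rangle,
\end{align*}
where $B=\sum_\alpha(L_{A_\alpha}^\mu)^*A_\alpha$ and $B'=\sum_\alpha(R_{A_\alpha}^\mu)^*A_\alpha$ (both in $\mathfrak{l}_0$). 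Since $\langle B,Y\rangle=\sum_\alpha\langle A_\alpha,[A_\alpha,Y]\rangle=-\overline{\operatorname{tr}(\operatorname{ad}_Y|_{\mathfrak{l}_0})}=0$ for all $Y\in\mathfrak{l}_0$ (reductive Lie algebras are unimodular), we get $B=B'=0$, so these terms vanish and $\textnormal{M}_{\mu_+}=\textnormal{M}_\mu|_{\mathfrak{l}_{+}}=c_\mu I+D_\mu|_{\mathfrak{l}_{+}}$ with $D_\mu|_{\mathfrak{l}_{+}}\in\textnormal{Der}(\mu_+)$. By Theorem~\ref{MID}, $[\mu_+]$ is therefore a critical point, of type $(k_2<\cdots<k_r;d_2,\ldots,d_r)$ because $\gcd(k_2,\ldots,k_r)=\gcd(0,k_2,\ldots,k_r)=1$. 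I expect this vanishing of the $\mathfrak{l}_0$-cross-terms—resting on the commutator identity together with the unimodularity of $\mathfrak{l}_0$—to be the main obstacle, the rest being bookkeeping with the eigenspace decomposition.
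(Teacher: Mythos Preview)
Your argument is essentially correct and follows the paper for (i) and (iii), but takes genuinely different routes for (ii) and for the ``criticality of $\mathfrak{l}_{+}$'' part of (iv).

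For (ii), the paper never argues directly that $\mathfrak{l}_0$ is anticommutative. Instead it sets $\mathfrak{h}=\mu(\mathfrak{l}_0,\mathfrak{l}_0)$, uses (i) to see that $L_A^\mu|_{\mathfrak{l}_0}$, $R_A^\mu|_{\mathfrak{l}_0}$ and their adjoints preserve $\mathfrak{h}$, deduces that the orthogonal complement $\mathfrak{z}$ is central, and then repeats the trick on the radical $\bar{\mathfrak{s}}$ of $\mathfrak{h}$ to force $\bar{\mathfrak{s}}=\mu(\bar{\mathfrak{s}},\bar{\mathfrak{s}})$, hence $\bar{\mathfrak{s}}=0$; the Lie-algebra structure then comes for free from the Leibniz Levi theorem. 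Your route---squares lie in the two-sided annihilator, annihilator $\cap\,\mathfrak{l}_0=0$, then orthogonal complements of characteristic ideals---is equally valid and arguably more transparent, since it isolates exactly where the \emph{symmetric} Leibniz hypothesis enters.

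For the critical-point statement in (iv), the paper writes $\textnormal{M}_\mu|_{\mathfrak{n}}=\textnormal{M}_{\mu_\mathfrak{n}}+2T$ with $T=\sum_i([L_{A_i}^\mu,(L_{A_i}^\mu)^*]+[R_{A_i}^\mu,(R_{A_i}^\mu)^*])|_{\mathfrak{n}}$ and kills $T$ by a trace argument: Corollary~\ref{MD} gives $\operatorname{tr}\textnormal{M}_{\mu_\mathfrak{n}}T=0$ and $\operatorname{tr}\textnormal{M}_\mu|_{\mathfrak{n}}T=0$, hence $\operatorname{tr}T^2=0$, hence $T=0$. Your approach is more direct: the derivation identity collapses $T$ to $-L^\mu_B-R^\mu_{B'}$ with $B,B'\in\mathfrak{l}_0$, and unimodularity of the reductive Lie algebra $\mathfrak{l}_0$ forces $B=B'=0$. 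This is a nice shortcut, though it leans on (ii), whereas the paper's trace argument is self-contained.

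One small slip in your nilradical paragraph: the sentence ``would act on $\mathfrak{l}_+$ by a \emph{nonzero} normal---hence non-nilpotent---operator'' has the logic inverted. For $Z\in\textnormal{N}(\mu)\cap\mathfrak{z}(\mathfrak{l}_0)$ the operators $L_Z^\mu,R_Z^\mu$ are simultaneously normal (by (iii)) and nilpotent (since $Z\in\textnormal{N}(\mu)$), hence \emph{zero}; thus $Z$ lies in the full annihilator of $\mu$, and your own observation ``annihilator $\cap\,\mathfrak{l}_0=0$'' finishes the job. This is exactly how the paper argues it as well.
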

\begin{proof}
For (i), since $D_\mu, L_A^{\mu}$ and $ R_A^{\mu}$ are derivations of $\mu$, we have
\begin{align*}
[D_\mu,L_A^{\mu}]=L_{D_\mu A}^{\mu}=0,\\
[D_\mu,R_A^{\mu}]=R_{D_\mu A}^{\mu}=0,
\end{align*}
for any $A\in \mathfrak{l}_0.$ Then it follows that
\begin{align*}
\operatorname{tr} \textnormal{M}_\mu[L_A^{\mu},(L_A^{\mu})^{*}]
&=\operatorname{tr}  (c_\mu I+ D_\mu)[L_A^{\mu},(L_A^{\mu})^{*}]\\
&=\operatorname{tr}  D_\mu[L_A^{\mu},(L_A^{\mu})^{*}]\\
&=\operatorname{tr}  [D_\mu,L_A^{\mu}](L_A^{\mu})^{*}\\
&=0.
\end{align*}
So $(L_A^{\mu})^{*}\in \textnormal{Der}(\mu)$ by Corollary~\ref{MD}.  Similarly, we have $(R_A^{\mu})^{*}\in \textnormal{Der}(\mu)$. This proves (i).

For (ii),    let $\mathfrak{l}_0=\mathfrak{h}\oplus \mathfrak{z}$ be the orthogonal decomposition, where $\mathfrak{h}=\mu(\mathfrak{l}_0,\mathfrak{l}_0).$  We claim that $\mathfrak{z}$ is the center of $\mathfrak{l}_0.$  Indeed,  by  the orthogonal decomposition of eigenspaces (\ref{ortho}), we have
\begin{align*}
L_A^{\mu}=\left(\begin{array}{cc}
L_A^{\mu}|_{\mathfrak{l}_0}&0\\
0&L_A^{\mu}|_{\mathfrak{l}_+}
\end{array}\right), \quad
R_A^{\mu}=\left(\begin{array}{cc}
R_A^{\mu}|_{\mathfrak{l}_0}&0\\
0&R_A^{\mu}|_{\mathfrak{l}_+}
\end{array}\right),
\end{align*}
for any $A\in \mathfrak{l}_0.$ Since $\mathfrak{h}$ is $\textnormal{Der}(\mathfrak{l}_0)$-invariant, then by (i) we know that $L_A^{\mu}|_{\mathfrak{l}_0},R_A^{\mu}|_{\mathfrak{l}_0}\in \textnormal{Der}(\mathfrak{l}_0)$ are of the form
\begin{align*}
L_A^{\mu}|_{\mathfrak{l}_0}=\left(\begin{array}{cc}
L_A^{\mu}|_{\mathfrak{h}}&0\\
0&0
\end{array}\right), \quad
R_A^{\mu}|_{\mathfrak{l}_0}=\left(\begin{array}{cc}
R_A^{\mu}|_{\mathfrak{h}}&0\\
0&0
\end{array}\right),
\end{align*}
for any $A\in \mathfrak{l}_0.$
So $\mu(\mathfrak{l}_0,\mathfrak{z})=\mu(\mathfrak{z},\mathfrak{l}_0)=0,$ i.e.,  $\mathfrak{z}$ lies in the center of $\mathfrak{l}_0.$   Moreover,  it follows that  $\mathfrak{h}=\mu(\mathfrak{h},\mathfrak{h}).$ Let $\mathfrak{h}=\bar{\mathfrak{r}}\oplus \bar{\mathfrak{s}}$ be the orthogonal decomposition, where $\bar{\mathfrak{s}} $ is the radical of $\mathfrak{h}.$ Since $\bar{\mathfrak{s}} $ is $\textnormal{Der}(\mathfrak{h})$-invariant, then by (i), we know that $L_H^{\mu}|_{\mathfrak{h}},R_H^{\mu}|_{\mathfrak{h}}\in \textnormal{Der}(\mathfrak{h})$ are of the form
\begin{align*}
L_H^{\mu}|_{\mathfrak{h}}=\left(\begin{array}{cc}
L_H^{\mu}|_{\bar{\mathfrak{r}}}&0\\
0&L_H^{\mu}|_{\bar{\mathfrak{s}} }
\end{array}\right),\quad
R_H^{\mu}|_{\mathfrak{h}}=\left(\begin{array}{cc}
R_H^{\mu}|_{\bar{\mathfrak{r}}}&0\\
0&R_H^{\mu}|_{\bar{\mathfrak{s}} }
\end{array}\right),
\end{align*}
for any $H\in \mathfrak{h}.$ Clearly, $\bar{\mathfrak{r}}$ is an ideal of $\mathfrak{h}$, and $\mathfrak{h}=\mu(\mathfrak{h},\mathfrak{h})=\mu(\bar{\mathfrak{r}},\bar{\mathfrak{r}})\oplus\mu(\bar{\mathfrak{s}},\bar{\mathfrak{s}} )$. So $\bar{\mathfrak{s}} =\mu(\bar{\mathfrak{s}},\bar{\mathfrak{s}} ).$ Since $\bar{\mathfrak{s}} $ is solvable, we conclude that $\bar{\mathfrak{s}}=0.$ Therefore $\mathfrak{h}$ is a semisimple Lie algebra by Theorem~\ref{levi}, and moreover we deduce that  $\mathfrak{z}$ is the center of $\mathfrak{f}.$
  This proves (ii).

For (iii), assume that $Z\in\mathfrak{z}$, then  by (i) we know that  the derivations $(L_Z^{\mu})^{*},(R_Z^{\mu})^{*}$ vanish on $\mathfrak{l}_0$,  and in particularly, $(L_Z^{\mu})^{*}Z=0, (R_Z^{\mu})^{*}Z=0$. Hence
\begin{align*}
[(L_Z^{\mu})^{*},L_Z^{\mu}]=0,\quad [(R_Z^{\mu})^{*},R_Z^{\mu}]=0.
\end{align*}
That is,  $L_Z^{\mu}$ and $R_Z^{\mu}$ are normal.   This proves (iii).

For (iv),  it  follows from (ii) that $\mathfrak{s}:=\mathfrak{z}\oplus\mathfrak{l}_{+}$ is the radical of $\mathfrak{l}.$ Assume that $Z\in\mathfrak{z}$ belongs to the nilradical of $\mu$, then     $L_Z^{\mu}$ and $R_Z^{\mu}:\mathfrak{l}\rightarrow\mathfrak{l}$ are necessarily nilpotent.  Together with (iii), we see  that  $L_Z^{\mu}$ and $R_Z^{\mu}$ are both normal and nilpotent,  so   $L_Z^{\mu}=R_Z^{\mu}=0,$ i.e.,  $Z$ lies in the center of $\mathfrak{l}$. This, however, contradicts $Z\in\mathfrak{l}_0$. So $Z=0$, and $\mathfrak{l}_{+}$ is the nilradical of $\mathfrak{l}$.
Set   $\mathfrak{n}:=\mathfrak{l}_{+}$, and denote  by $\mu_{\mathfrak{n}}$   the corresponding element  in $S_m$, where $m=\dim \mathfrak{l}_{+}$.
Assume that  $\{A_i\}$  is an orthonormal basis of $\mathfrak{l}_{0}$,
then by (\ref{Mformula}), we have
\begin{align}\label{n}
{\textnormal{M}_\mu}|_{\mathfrak{n}}=\textnormal{M}_{\mu_{\mathfrak{n}}}
+2\sum_{i}([L_{A_i}^{\mu},(L_{A_i}^{\mu})^{*}]+[R_{A_i}^{\mu},(R_{A_i}^{\mu})^{*}])|_{\mathfrak{n}}.
\end{align}
Using  (i) and Corollary~\ref{MD},  it follows that
\begin{align*}
\operatorname{tr}\textnormal{M}_{\mu_{\mathfrak{n}}}[L_{A_i}^{\mu},(L_{A_i}^{\mu})^{*}]|_{\mathfrak{n}}
=\operatorname{tr}\textnormal{M}_{\mu_{\mathfrak{n}}}[R_{A_i}^{\mu},(R_{A_i}^{\mu})^{*}]|_{\mathfrak{n}}=0.
\end{align*}
Since  $\operatorname{tr}\textnormal{M}_{\mu}[L_{A_i}^{\mu},(L_{A_i}^{\mu})^{*}]
=\operatorname{tr}\textnormal{M}_{\mu}[R_{A_i}^{\mu},(R_{A_i}^{\mu})^{*}]=0,$  by (\ref{n}) we have
\begin{align*}
\operatorname{tr}\textnormal{M}_{\mu}[L_{A_i}^{\mu},(L_{A_i}^{\mu})^{*}]
&=\operatorname{tr}\textnormal{M}_{\mu}|_{\mathfrak{n}}[L_{A_i}^{\mu},(L_{A_i}^{\mu})^{*}]_{\mathfrak{n}}=0,\\
\operatorname{tr}\textnormal{M}_{\mu}[R_{A_i}^{\mu},(R_{A_i}^{\mu})^{*}]
&=\operatorname{tr}\textnormal{M}_{\mu}|_{\mathfrak{n}}[R_{A_i}^{\mu},(R_{A_i}^{\mu})^{*}]_{\mathfrak{n}}=0.
\end{align*}
Put $T=\sum_{i}([L_{A_i}^{\mu},(L_{A_i}^{\mu})^{*}]+[R_{A_i}^{\mu},(R_{A_i}^{\mu})^{*}])|_{\mathfrak{n}}$, then  we have $\operatorname{tr} T^2=0$. Noting  that $T$ is Hermitian,  we conclude  $T=0.$ So $\mathfrak{n}=\mathfrak{l}_{+}$ corresponds to  a critical point of  type $(k_2<\cdots<k_r;d_2,\cdots,d_r)$ for the functional  $F_{m}: S_{m} \rightarrow \mathbb{R}$.
\end{proof}

\begin{remark}\label{s}
 Assume  that $[\mu]\in L_{n}$ is an arbitrary  critical point of $F_{n}: L_{n} \rightarrow \mathbb{R}$, and  $R_A^{\mu}\in \textnormal{Der}(\mu)$ for any $A\in \mathfrak{n}^{\perp}$ where $\mathfrak{n}$  denotes the direct  sum of  eigenspaces of $D_\mu$  with eigenvalues  larger than zero.
Then  we obtain  the same conclusions as in Theorem~\ref{structure}, except for that the nilradcal of $\mu$  might be a non-symmetric Leibniz algebra (see  Remark~\ref{n-e}).
\end{remark}

In the sequel, we characterize the critical points that lie in $S_n$ in terms of those which are nilpotent.

\begin{theorem}[Solvable extension]\label{solvable}
Assume that $\mathfrak{a}$ is an  abelian  Lie algebra of dimension $d_1$, and  $[\lambda]$ is a critical point of $F_{m}: S_{m} \rightarrow \mathbb{R}$  of type $(k_2<\cdots<k_r;d_2,\cdots,d_r)$ where $k_2>0.$ Consider the  direct sum
\begin{align*}
\mu=\mathfrak{a}\ltimes_\rho\lambda,
\end{align*}
where $\rho=(L^{\rho},R^{\rho})$,  and  $L^{\rho}:\mathbb{C}^{d_1}\times \mathbb{C}^{m}\rightarrow\mathbb{C}^{m}$, $R^{\rho}:\mathbb{C}^{m}\times \mathbb{C}^{d_1}\rightarrow\mathbb{C}^{m}$ are bilinear mappings, such that $\mu$ is  a symmetric Leibniz algebra with bracket relations given by
$$\mu(A+X,B+Y):=L^{\rho}_A(Y)+R^{\rho}_B(X)+\lambda (X,Y)$$
for all $A,B\in\mathbb{C}^{d_1}$, $X,Y\in \mathbb{C}^{m}.$
Assume that the following  conditions are satisfied
\begin{enumerate}
\item [(i)]  $[D_\lambda,L^{\rho}_A]=0,[D_\lambda,R^{\rho}_A]=0$, $\forall A\in \mathbb{C}^{d_1}.$
\item [(ii)]  $[L^{\rho}_A,(L^{\rho}_A)^{*}]=0,[R^{\rho}_A,(R^{\rho}_A)^{*}]=0$, $\forall A\in \mathbb{C}^{d_1};$ and for each $0\ne A\in \mathbb{C}^{d_1},$ $L^{\rho}_A$ or $R^{\rho}_A$ is not zero.
\end{enumerate}
If we extend the Hermitian inner product on $\mathbb{C}^{m}$ by setting
\begin{align*}
 \langle A,B\rangle=-\frac{2}{c_\lambda}(\operatorname{tr}L^{\rho}_A(L^{\rho}_B)^{*}+\operatorname{tr}R^{\rho}_A(R^{\rho}_B)^{*}),  ~~A,B\in \mathbb{C}^{d_1},
\end{align*}
 then $[\mu]$ is a solvable critical point  of type $(0<k_2<\cdots<k_r;d_1,d_2,\cdots,d_r)$ for  $F_{n}: S_{n} \rightarrow \mathbb{R}$, $n=d_1+m.$
\end{theorem}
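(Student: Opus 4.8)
The plan is to verify the critical-point criterion of Theorem~\ref{MID} directly, namely to exhibit a decomposition $\textnormal{M}_\mu = c_\mu I + D_\mu$ with $D_\mu\in\textnormal{Der}(\mu)$. With respect to the orthogonal splitting $\mathbb{C}^n=\mathfrak{a}\oplus\mathbb{C}^m$ (the two summands being orthogonal under the prescribed extension of the inner product), I would guess from the outset that $c_\mu=c_\lambda$ and that $D_\mu$ is the block operator $D_\mu=0_{\mathfrak{a}}\oplus D_\lambda$, acting as $0$ on $\mathfrak{a}$ and as $D_\lambda$ on $\mathbb{C}^m$. The whole proof then reduces to computing the three blocks of $\textnormal{M}_\mu$ in this splitting and matching them against $c_\lambda I + D_\mu$, and finally confirming that $D_\mu$ is a derivation.

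First I would record the block descriptions $L^\mu_A=0\oplus L^\rho_A$ and $R^\mu_A=0\oplus R^\rho_A$ for $A\in\mathfrak{a}$, together with $\mu(\mathbb{C}^n,\mathbb{C}^n)\subseteq\mathbb{C}^m$. For the $\mathfrak{a}$-block, this containment annihilates the first sum in (\ref{Mformula}), and a Parseval computation collapses the remaining two sums into traces, yielding
\begin{align*}
\langle \textnormal{M}_\mu A,B\rangle=-2\bigl(\operatorname{tr}L^\rho_A(L^\rho_B)^{*}+\operatorname{tr}R^\rho_A(R^\rho_B)^{*}\bigr)=c_\lambda\langle A,B\rangle,
\end{align*}
which is precisely why the inner product on $\mathfrak{a}$ was defined by that formula; here $c_\lambda<0$ (Theorem~\ref{MID}(i) applied to $\lambda$) together with condition (ii) makes $\langle\cdot,\cdot\rangle$ a genuine positive-definite Hermitian product on $\mathfrak{a}$. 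For the $\mathbb{C}^m$-block I would expand $\langle\textnormal{M}_\mu X,Y\rangle$ for $X,Y\in\mathbb{C}^m$, split every basis sum over $\{A_a\}\cup\{Y_\alpha\}$, and collect the $\mathbb{C}^m$-only contributions into $\langle\textnormal{M}_\lambda X,Y\rangle$; the leftover ``$\mathfrak{a}$-contributions'' reorganize, again by Parseval, into $2\sum_a\langle([L^\rho_{A_a},(L^\rho_{A_a})^{*}]+[R^\rho_{A_a},(R^\rho_{A_a})^{*}])X,Y\rangle$, which vanishes by condition (ii). Hence $\textnormal{M}_\mu|_{\mathbb{C}^m}=\textnormal{M}_\lambda=c_\lambda I+D_\lambda$.

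The main obstacle is the off-diagonal block: for $A\in\mathfrak{a}$, $Y\in\mathbb{C}^m$ the same bookkeeping gives
\begin{align*}
\langle\textnormal{M}_\mu A,Y\rangle=-2\operatorname{tr}\bigl(L^\rho_A(L^\lambda_Y)^{*}\bigr)-2\operatorname{tr}\bigl(R^\rho_A(R^\lambda_Y)^{*}\bigr),
\end{align*}
and it is not obvious that these traces cancel. I would dispose of them by a grading argument. Decompose $\mathbb{C}^m$ into the $D_\lambda$-eigenspaces $V_{k_j}$; since $D_\lambda\in\textnormal{Der}(\lambda)$ these form a grading, so for homogeneous $Y$ of degree $k_j$ the operator $R^\lambda_Y$ raises degree by $k_j$, whence $(R^\lambda_Y)^{*}$ lowers it by $k_j$, whereas $R^\rho_A$ preserves degree because $[D_\lambda,R^\rho_A]=0$ by condition (i). Thus $R^\rho_A(R^\lambda_Y)^{*}$ strictly lowers degree by $k_j>0$ (all weights are positive, as $k_2>0$), so its diagonal blocks vanish and its trace is $0$; the $L$-term is identical. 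This is exactly where the hypothesis $k_2>0$ (equivalently, $D_\lambda$ positive definite) is essential. Consequently all off-diagonal entries vanish and $\textnormal{M}_\mu=c_\lambda I_n+D_\mu$.

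It remains to check that $D_\mu=0\oplus D_\lambda\in\textnormal{Der}(\mu)$ and to read off the conclusions. The derivation property is a four-case verification on $\mu(A+X,B+Y)$: the pure-$\mathbb{C}^m$ case is $D_\lambda\in\textnormal{Der}(\lambda)$, and the two mixed cases reduce exactly to $[D_\lambda,L^\rho_A]=[D_\lambda,R^\rho_A]=0$, i.e.\ condition (i). By Theorem~\ref{MID} this makes $[\mu]$ a critical point of $F_n$ with $c_\mu=c_\lambda$, and since $\mu\in S_n$ it is a critical point of $F_n:S_n\to\mathbb{R}$. The eigenvalues of $D_\mu$ are $0$ with multiplicity $d_1$ (on $\mathfrak{a}$) together with $k_2<\cdots<k_r$ with multiplicities $d_2,\dots,d_r$ (from $D_\lambda$), giving the asserted type $(0<k_2<\cdots<k_r;d_1,d_2,\dots,d_r)$. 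Finally, since all weights of $D_\lambda$ are positive, $\lambda$ is positively graded and hence nilpotent, and $\mu(\mu,\mu)\subseteq\mathbb{C}^m$ with $\mathbb{C}^m=\mathfrak{l}_+$ nilpotent forces $\mu$ to be solvable, which completes the proof.
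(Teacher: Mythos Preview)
Your proof is correct and follows the same overall strategy as the paper: compute $\textnormal{M}_\mu$ blockwise with respect to $\mathbb{C}^n=\mathfrak{a}\oplus\mathbb{C}^m$ and verify $\textnormal{M}_\mu=c_\lambda I+(0\oplus D_\lambda)$. The one substantive difference is your treatment of the off-diagonal block. The paper first observes that condition~(ii), via Corollary~\ref{MD}, forces $(L^\rho_A)^{*},(R^\rho_A)^{*}\in\textnormal{Der}(\lambda)$, and then uses that a derivation composed with a left or right multiplication in a \emph{nilpotent} algebra has trace zero (triangularity with respect to the lower central series). You instead use condition~(i) directly: $L^\rho_A,R^\rho_A$ commute with $D_\lambda$ and hence preserve the $D_\lambda$-grading, while $(L^\lambda_Y)^{*},(R^\lambda_Y)^{*}$ strictly lower it, so the composite is traceless. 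Your route is more elementary (it bypasses Corollary~\ref{MD}) and makes the role of the hypothesis $k_2>0$ explicit; the paper's route has the side benefit of recording $(L^\rho_A)^{*},(R^\rho_A)^{*}\in\textnormal{Der}(\lambda)$, which ties in with the structure analysis in Theorem~\ref{structure}. You also spell out the $\mathbb{C}^m$-block computation, the verification that $0\oplus D_\lambda\in\textnormal{Der}(\mu)$, and the solvability of $\mu$, all of which the paper leaves to the reader.
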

\begin{proof}
Put $\mathfrak{n}=(\mathbb{C}^{m},\lambda)$, and let  $\{X_i\}$ be an orthonormal basis  of $\mathbb{C}^{m}$. It follows from the condition (ii) that $(L^{\rho}_A)^{*},(R^{\rho}_A)^{*}\in\textnormal{Der}(\lambda)$ for   all $A\in \mathbb{C}^{d_1}.$
Then  we have
\begin{align*}
\langle\textnormal{M}_{\mu} X, A\rangle&=-2 \sum_{i,j}\langle\mu(X_i,X), X_{j}\rangle \overline{\langle\mu(X_i,A), X_{j}\rangle}
-2 \sum_{i,j}\langle\mu(X, X_i), X_{j}\rangle \overline{\langle\mu(A, X_i), X_{j}\rangle}\\
&=-2 \sum_{i,j}\langle\lambda(X_i,X), X_{j}\rangle \overline{\langle\mu(X_i,A), X_{j}\rangle}
-2 \sum_{i,j}\langle\lambda(X, X_i), X_{j}\rangle \overline{\langle\mu(A, X_i), X_{j}\rangle}\\
&=-2\operatorname{tr}(R^{\rho}_A)^{*}R^{\lambda}_X-2\operatorname{tr}(L^{\rho}_A)^{*}L^{\lambda}_X\\
&=0,
\end{align*}
for  any $A\in \mathbb{C}^{d_1}, X\in\mathbb{C}^{m}$  since $\lambda$ is nilpotent  and  $(L^{\rho}_A)^{*},(R^{\rho}_A)^{*}\in\textnormal{Der}(\mathfrak{\lambda}).$   So $\textnormal{M}_{\mu}$ leaves  $\mathfrak{a}$ and $\mathfrak{n}$ invariant,  and moreover, it is not hard  to see that  $\textnormal{M}_\mu|_\mathfrak{n}=\textnormal{M}_\lambda=c_\lambda I+D_\lambda$  by (\ref{Mformula}).
On the other hand, we have
\begin{align*}
\langle\textnormal{M}_{\mu} A, B\rangle&=-2 \sum_{i,j}\langle\mu(X_i,A), X_{j}\rangle \overline{\langle\mu(X_i,B), X_{j}\rangle}
-2 \sum_{i,j}\langle\mu(A, X_i), X_{j}\rangle \overline{\langle\mu(B, X_i), X_{j}\rangle}\\
&=-2(\operatorname{tr}L^{\rho}_A(L^{\rho}_B)^{*}+\operatorname{tr}R^{\rho}_A(R^{\rho}_B)^{*})\\
 &=c_\lambda \langle A, B\rangle,
\end{align*}
for any $A,B\in \mathbb{C}^{d_1}.$ So $\textnormal{M}_\mu=c_\mu I+D_\mu,$ where $c_\mu=c_\lambda$ and
\begin{align*}
D_\mu=\left( {\begin{array}{*{20}{c}}
	0&0\\
	0&D_\lambda\\
\end{array}} \right)\in\textnormal{Der}(\mu).
\end{align*}
This completes the proof.
\end{proof}

\begin{theorem}[General extension]\label{converse}
Assume that $\mathfrak{f}=\mathfrak{h}\oplus\mathfrak{z}$ is a reductive Lie algebra of dimension $d_1$, and  $[\lambda]$ is a critical point of $F_{m}: S_{m} \rightarrow \mathbb{R}$  of type $(k_2<\cdots<k_r;d_2,\cdots,d_r)$ where $k_2>0.$ Consider the   direct sum
\begin{align*}
\mu=\mathfrak{f}\ltimes_\rho\lambda,
\end{align*}
where  $\rho=(L^{\rho},R^{\rho})$, and  $L^{\rho}:\mathbb{C}^{d_1}\times \mathbb{C}^{m}\rightarrow\mathbb{C}^{m}$, $R^{\rho}:\mathbb{C}^{m}\times \mathbb{C}^{d_1}\rightarrow\mathbb{C}^{m}$ are bilinear mappings, such that $\mu$ is  a symmetric Leibniz algebra with bracket relations given by
$$\mu(A+X,B+Y):=\operatorname{ad}_{\mathfrak{f}}A(B)+L^{\rho}_A(Y)+R^{\rho}_B(X)+\lambda (X,Y)$$
for all $A,B\in \mathbb{C}^{d_1}$, $X,Y\in \mathbb{C}^{m}.$
Assume that the following  conditions are satisfied
\begin{enumerate}
\item [(i)]   $[D_\lambda,L^{\rho}_A]=0,[D_\lambda,R^{\rho}_A]=0$, $\forall A\in \mathbb{C}^{d_1}.$
\item [(ii)] $[L^{\rho}_Z,(L^{\rho}_Z)^{*}]=0,[R^{\rho}_Z,(R^{\rho}_Z)^{*}]=0,$ $\forall Z\in\mathfrak{z};$ and  for each  $0\ne Z\in \mathfrak{z},$ $L^{\rho}_Z$ or $R^{\rho}_Z$ is not zero.
\end{enumerate}
Let $\langle\cdot,\cdot\rangle_1$  be a  Hermitian inner product on $\mathfrak{f}$ and   $\{H_i~|~H_i\in \mathfrak{h}\}\cup\{Z_i~|Z_i\in\mathfrak{z}\}$ be an orthonormal basis  of $(\mathfrak{f},\langle\cdot,\cdot\rangle_1)$  such that $(\operatorname{ad}_{\mathfrak{f}}H_i)^{*1}=-\operatorname{ad}_{\mathfrak{f}}H_i$, $(L^{\rho}_{H_i})^{*}=-L^{\rho}_{H_i}$, $(R^{\rho}_{H_i})^{*}=-R^{\rho}_{H_i}$ for all $i$.
If we extend the Hermitian inner product on $\mathbb{C}^{m}$ by setting
\begin{align*}
 \langle A,B\rangle=-\frac{2}{c_\lambda}(\operatorname{tr}\operatorname{ad}_{\mathfrak{f}}A(\operatorname{ad}_{\mathfrak{f}}B)^{*1}
 +\operatorname{tr}L^{\rho}_A(L^{\rho}_B)^{*}+\operatorname{tr}R^{\rho}_A(R^{\rho}_B)^{*}),  ~~A,B\in \mathbb{C}^{d_1},
\end{align*}
 then $[\mu]$ is a critical point  of type $(0<k_2<\cdots<k_r;d_1,d_2,\cdots,d_r)$ for  $F_{n}: S_{n} \rightarrow \mathbb{R}$,  $n=d_1+m.$
\end{theorem}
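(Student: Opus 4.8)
The plan is to compute $\textnormal{M}_\mu$ blockwise in the orthogonal splitting $\mathbb{C}^n=\mathfrak{f}\oplus\mathbb{C}^m$, show it equals $c_\lambda I+D_\mu$ with $D_\mu=\operatorname{diag}(0,D_\lambda)\in\textnormal{Der}(\mu)$, and then quote Theorem~\ref{MID}. Here $\mathbb{C}^m=(\mathbb{C}^m,\lambda)$ is an ideal of $\mu$; for $A\in\mathfrak{f}$ the operators $L^\mu_A,R^\mu_A$ are block-diagonal with blocks $(\operatorname{ad}_{\mathfrak{f}}A,L^\rho_A)$ and $(-\operatorname{ad}_{\mathfrak{f}}A,R^\rho_A)$, whereas for $X\in\mathbb{C}^m$ the operators $L^\mu_X,R^\mu_X$ send $\mathfrak{f}$ into $\mathbb{C}^m$ and act as $L^\lambda_X,R^\lambda_X$ on $\mathbb{C}^m$. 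Because the multiplications of the symmetric Leibniz algebra $\mu$ are derivations and $\mathbb{C}^m$ is an ideal, $L^\rho_A,R^\rho_A\in\textnormal{Der}(\lambda)$; I first record that $(L^\rho_A)^{*},(R^\rho_A)^{*}\in\textnormal{Der}(\lambda)$ for every basis vector $A$, which follows for $A\in\mathfrak{z}$ from condition (ii) and the equality case of Corollary~\ref{MD}(ii), and for $A=H_i\in\mathfrak{h}$ from the skew-Hermitian hypotheses $(L^\rho_{H_i})^{*}=-L^\rho_{H_i}$, $(R^\rho_{H_i})^{*}=-R^\rho_{H_i}$.

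The off-diagonal and $\mathbb{C}^m$ blocks then proceed as in Theorem~\ref{solvable}. For $X\in\mathbb{C}^m$ and a basis vector $A\in\mathfrak{f}$, the formula (\ref{Mformula}) gives $\langle\textnormal{M}_\mu X,A\rangle=-2\operatorname{tr}(R^\rho_A)^{*}R^\lambda_X-2\operatorname{tr}(L^\rho_A)^{*}L^\lambda_X$, the first term of (\ref{Mformula}) dropping out since no product $\mu(E_\alpha,E_\beta)$ meets both $\mathfrak{f}$ and $\mathbb{C}^m$; each surviving trace vanishes because $\lambda$ is nilpotent, so the derivation $(L^\rho_A)^{*}$ preserves the lower central filtration of $\lambda$ while $L^\lambda_X$ strictly raises it, making the product strictly filtered and hence traceless. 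Thus $\textnormal{M}_\mu$ preserves the splitting. On $\mathbb{C}^m$ the restriction identity (\ref{n}) gives $\textnormal{M}_\mu|_{\mathbb{C}^m}=\textnormal{M}_\lambda+2\sum_a\big([L^\rho_{F_a},(L^\rho_{F_a})^{*}]+[R^\rho_{F_a},(R^\rho_{F_a})^{*}]\big)$ with $\{F_a\}=\{H_i\}\cup\{Z_i\}$, and every commutator vanishes (condition (ii) on $\mathfrak{z}$, skew-Hermiticity on $\mathfrak{h}$), so $\textnormal{M}_\mu|_{\mathbb{C}^m}=\textnormal{M}_\lambda=c_\lambda I+D_\lambda$.

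The $\mathfrak{f}$-block is where the argument genuinely goes beyond Theorem~\ref{solvable}. For $A,B\in\mathfrak{f}$, (\ref{Mformula}) splits $\langle\textnormal{M}_\mu A,B\rangle$ into a purely Lie-theoretic part $\langle\textnormal{M}_{\mathfrak{f}}A,B\rangle$, assembled from the structure constants and the $\operatorname{ad}$-operators of $\mathfrak{f}$, plus the representation part $-2\operatorname{tr}L^\rho_A(L^\rho_B)^{*}-2\operatorname{tr}R^\rho_A(R^\rho_B)^{*}$. The latter already reproduces two of the three terms defining $\langle A,B\rangle$, so the whole theorem reduces to the single identity $\langle\textnormal{M}_{\mathfrak{f}}A,B\rangle=-2\operatorname{tr}\operatorname{ad}_{\mathfrak{f}}A(\operatorname{ad}_{\mathfrak{f}}B)^{*1}$. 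I would establish this from the reductive structure: $\operatorname{ad}_{\mathfrak{f}}$ annihilates the centre $\mathfrak{z}$, so only $\mathfrak{h}$ contributes, and there the skew-Hermitian hypothesis lets one write $\langle[F_a,F_b],A\rangle=-\langle F_b,[F_a,A]\rangle$; summing over the basis, completeness collapses the structure-constant term against one of the two $\operatorname{ad}$-square terms and leaves precisely $-2\operatorname{tr}\operatorname{ad}_{\mathfrak{f}}A(\operatorname{ad}_{\mathfrak{f}}B)^{*1}$. This is exactly the computation exhibiting a compact semisimple Lie algebra as a critical point with $\textnormal{M}$ a multiple of the identity, now performed inside $\mathfrak{f}$. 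Hence $\langle\textnormal{M}_\mu A,B\rangle=c_\lambda\langle A,B\rangle$, i.e.\ $\textnormal{M}_\mu|_{\mathfrak{f}}=c_\lambda I$.

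Assembling the blocks yields $\textnormal{M}_\mu=c_\lambda I+D_\mu$ with $D_\mu=\operatorname{diag}(0,D_\lambda)$; condition (i), namely $[D_\lambda,L^\rho_A]=[D_\lambda,R^\rho_A]=0$, together with $D_\lambda\in\textnormal{Der}(\lambda)$, shows $D_\mu\in\textnormal{Der}(\mu)$, whence $[\mu]$ is a critical point by Theorem~\ref{MID}, of type $(0<k_2<\cdots<k_r;d_1,d_2,\cdots,d_r)$ since $D_\mu$ adds the eigenvalue $0$ with multiplicity $d_1$ to the spectrum of $D_\lambda$. The hard part is the $\mathfrak{f}$-block, precisely because one must reconcile the auxiliary metric $\langle\cdot,\cdot\rangle_1$, in which $\mathfrak{h}$ acts skew-Hermitianly, with the prescribed metric $\langle\cdot,\cdot\rangle$. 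The mechanism making them compatible is that $L^\rho$ and $R^\rho$ restrict to genuine representations of $\mathfrak{f}$ on $\mathbb{C}^m$ (a consequence of the Leibniz identity, giving $[L^\rho_A,L^\rho_B]=L^\rho_{[A,B]}$), so their trace forms are $\operatorname{ad}$-invariant; this, with a Fuglede-type step (a normal operator commuting with $L^\rho_Y$ forces its adjoint to commute as well), renders $\langle\cdot,\cdot\rangle$ block-proportional to $\langle\cdot,\cdot\rangle_1$ on the simple ideals and orthogonal to $\mathfrak{z}$, which is what legitimises the cancellation above.
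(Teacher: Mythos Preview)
Your proof follows the same blockwise computation of $\textnormal{M}_\mu$ as the paper: the off-diagonal block vanishes via nilpotency of $\lambda$ together with $(L^\rho_A)^*,(R^\rho_A)^*\in\textnormal{Der}(\lambda)$; the $\mathbb{C}^m$-block equals $\textnormal{M}_\lambda$ because every commutator $[L^\rho_{F_a},(L^\rho_{F_a})^*]$ and $[R^\rho_{F_a},(R^\rho_{F_a})^*]$ vanishes (normality on $\mathfrak{z}$, skew-Hermiticity on $\mathfrak{h}$); and the $\mathfrak{f}$-block is shown to equal $c_\lambda I$ by direct expansion. The paper carries out exactly these three steps and then writes down $D_\mu=\operatorname{diag}(0,D_\lambda)$.

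Where you differ is only in rigor on the $\mathfrak{f}$-block. The paper inserts the $\langle\cdot,\cdot\rangle_1$-orthonormal basis $\{H_i,Z_i\}$ directly into formula~(\ref{Mformula}) and reads off $-2\operatorname{tr}\operatorname{ad}_{\mathfrak f}A(\operatorname{ad}_{\mathfrak f}B)^{*1}$ without comment, tacitly treating that basis as $\langle\cdot,\cdot\rangle$-orthonormal as well. You instead flag this and supply the justification: the trace forms $\operatorname{tr}L^\rho_{\,\cdot\,}(L^\rho_{\,\cdot\,})^*$ etc.\ are invariant under the compact real form spanned by the $H_i$ (since $L^\rho$ is a representation and $L^\rho_{H_i}$ is skew), hence block-proportional to $\langle\cdot,\cdot\rangle_1$ on each simple ideal of $\mathfrak{h}$; and the Fuglede step, combined with $\mathfrak{h}=[\mathfrak{h},\mathfrak{h}]$, gives $\langle H_i,Z_j\rangle=0$. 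This makes $\operatorname{ad}_{\mathfrak f}H_i$ skew-Hermitian for $\langle\cdot,\cdot\rangle$ as well, which is precisely what legitimizes the cancellation you describe. In short, you are running the paper's proof but filling in a compatibility step that the paper takes for granted.
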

\begin{proof}
Put $\mathfrak{n}=(\mathbb{C}^{m},\lambda)$, and let $\{A_i\}=\{H_i,Z_i\}$ be the orthonormal basis  of $(\mathbb{C}^{d_1},\langle\cdot,\cdot\rangle_1)$  as in hypothesis, and $\{X_i\}$ be an orthonormal basis  of $\mathbb{C}^{m}.$ Then for any $A\in \mathbb{C}^{d_1}, X\in\mathbb{C}^{m}$, we have
\begin{align*}
\langle\textnormal{M}_{\mu} X, A\rangle&=-2 \sum_{i,j}\langle\mu(X_i,X), X_{j}\rangle \overline{\langle\mu(X_i,A), X_{j}\rangle}
-2 \sum_{i,j}\langle\mu(X, X_i), X_{j}\rangle \overline{\langle\mu(A, X_i), X_{j}\rangle}\\
&=-2 \sum_{i,j}\langle\lambda(X_i,X), X_{j}\rangle \overline{\langle\mu(X_i,A), X_{j}\rangle}
-2 \sum_{i,j}\langle\lambda(X, X_i), X_{j}\rangle \overline{\langle\mu(A, X_i), X_{j}\rangle}\\
&=-2\operatorname{tr}(R^{\rho}_A)^{*}R^{\lambda}_X-2\operatorname{tr}(L^{\rho}_A)^{*}L^{\lambda}_X\\
&=0,
\end{align*}
since $\lambda$ is nilpotent  and  $(L^{\rho}_A)^{*},(R^{\rho}_A)^{*}\in\textnormal{Der}(\mathfrak{\lambda}).$ So $\textnormal{M}_{\mu}$ leaves  $\mathfrak{f}$ and $\mathfrak{n}$ invariant,  and it is not hard  to see that  $\textnormal{M}_\mu|_\mathfrak{n}=\textnormal{M}_\lambda=c_\lambda I+D_\lambda$  by (\ref{Mformula}).
Moreover, for any $A,B\in \mathbb{C}^{d_1}$, we have
\begin{align*}
\langle\textnormal{M}_{\mu} A, B\rangle&=2 \sum_{i,j} \overline{\langle\mu( A_i,A_j), A\rangle}\langle\mu(A_i,A_j), B\rangle\\
&\quad-2 \sum_{i,j}\langle\mu(A_i,A), A_{j}\rangle \overline{\langle\mu(A_i,B), A_{j}\rangle}-2 \sum_{i,j}\langle\mu(X_i,A), X_{j}\rangle \overline{\langle\mu(X_i,X), X_{j}\rangle}\\
&\quad -2 \sum_{i,j}\langle\mu(A, A_i), A_{j}\rangle \overline{\langle\mu(B, A_i), A_{j}\rangle}
-2 \sum_{i,j}\langle\mu(A, X_i), X_{j}\rangle \overline{\langle\mu(X, X_i), X_{j}\rangle}\\
&=-2(\operatorname{tr}\operatorname{ad}_{\mathfrak{f}}A(\operatorname{ad}_{\mathfrak{f}}B)^{*1}
 +\operatorname{tr}L^{\rho}_A(L^{\rho}_B)^{*}+\operatorname{tr}R^{\rho}_A(R^{\rho}_B)^{*})\\
 &=c_\lambda \langle A, B\rangle.
\end{align*}
So $\textnormal{M}_\mu=c_\mu I+D_\mu,$ where  $c_\mu=c_\lambda,$ and
\begin{align*}
D_\mu=\left( {\begin{array}{*{20}{c}}
	0&0\\
	0&D_\lambda\\
\end{array}} \right)\in\textnormal{Der}(\mu).
\end{align*}
This completes the proof.
\end{proof}

\begin{remark}\label{s-c}
The condition in Theorem~\ref{solvable} and Theorem~\ref{converse} can be relaxed as follows:  $[\lambda]$ is a critical point of $F_{m}: L_{m} \rightarrow \mathbb{R}$  of type $(k_2<\cdots<k_r;d_2,\cdots,d_r)$ where $k_2>0,$ and  the constructed algebra $\mu$ is  a  Leibniz algebra with $R^{\rho}_A\in\textnormal{Der}(\mu)$ for all $A\in \mathbb{C}^{d_1}.$
\end{remark}

\section{Examples}\label{examples}
In this section, we classify  the critical  points of the functional $F_n: S_n \rightarrow \mathbb{R}$  for $n=2$ and $3$, respectively.

\subsection{Two-dimensional case} Note that there are only two non-abelian two-dimensional symmetric Leibniz algebras up to isomorphism, which is defined by
\begin{align*}
 \textnormal{Lie:} ~&[e_1,e_2]=e_2;\\
 \textnormal{non-Lie:} ~&[e_1,e_1]=e_2.
\end{align*}
Indeed, endow the two algebras with the Hermitian inner product $\langle\cdot,\cdot\rangle$,   so  that  $\{e_1,e_{2}\}$ is an orthonormal  basis. Then it is easy  to see that the Lie algebra is   a critical point  of $F_2$ with type $(0<1;1,1)$, and the critical value is $4;$  The non-Lie  symmetric Leibniz algebra is   a critical point  of $F_2$ with type $(1<2;1,1)$, and the critical value is $20.$

\subsection{Three-dimensional case}The classification of $3$-dimensional symmetric Leibniz  algebras over $\mathbb{C}$ can be found in \cite{AO1998,CIL2012}. We classify the critical  points of the functional $F_3: S_3 \rightarrow \mathbb{R}$ as follows
\begin{align*}
&\text { TABLE I. }  \text { non-zero 3-dimensional symmetric Leibniz algebras, critical types and critical values. }\\
&\begin{array}{llllc}
\hline \hline \g & \text{Type}&  \text{Multiplication table }\quad\quad\quad\quad &  \text{Critical type}\quad\quad\quad\quad&\text{Critical value} \\
\hline
\text{L}_1 &\text{Lie}&\left\{ \begin{aligned}
&~~[e_{1}, e_{2}]=e_{3}
\end{aligned} \right.
&(1<2;2,1) & 12 \\
\text{L}_2 &\text{Lie}&\left\{ \begin{aligned}
&~~[e_{1}, e_{2}]=e_{2}
\end{aligned} \right.
&(0<1;1,2) & 4 \\
\text{L}_{3}(\alpha), \alpha\ne0&\text{Lie} &\left\{ \begin{aligned}
&~~[e_{3},e_{1} ]=e_{1},  [e_{3}, e_{2}]=\alpha e_{2},
\end{aligned} \right. &(0<1;1,2) & 4 \\
\text{L}_4&\text{Lie} & \left\{ \begin{aligned}
&~~[e_{3}, e_{1}]=e_{1}+e_{2}, [e_{3}, e_{2}]=e_{2}
\end{aligned} \right.
 &- & - \\
\text{L}_5&\text{Lie} &  \left\{\begin{aligned}
&[e_{3}, e_{1}]=2e_{1},[e_{3}, e_{2}]=-2e_{2}\\
&[e_{1}, e_{2}]=e_{3}
\end{aligned} \right.
 &  (0;3) & \frac{4}{3} \\
\text{S}_{1}&\text{non-Lie}& \left\{ \begin{aligned}
&~~[e_{3},e_{3} ]=e_{1}
\end{aligned} \right. &(3<5<6;1,1,1) & 20\\
\text{S}_{2} &\text{non-Lie}&
\left\{ \begin{aligned}
&~~[e_{2},e_{2} ]=e_{1},  [e_{3}, e_{3}]=e_{1}
\end{aligned} \right.
&(1<2;2,1) & 12\\
\text{S}_{3}(\frac{1}{4})&\text{non-Lie}&
\left\{ \begin{aligned}
	&[e_{2},e_{2} ]=\frac{1}{4}e_{1},  [e_{3}, e_{2}]=e_{1}, \\
	&[e_{3},e_{3} ]=e_{1}
\end{aligned} \right.& -  &  - \\
\text{S}_{3}(\beta),\beta\ne \frac{1}{4}&\text{non-Lie}&
\left\{ \begin{aligned}
&[e_{2},e_{2} ]=\beta e_{1},  [e_{3}, e_{2}]=e_{1}, \\
&[e_{3},e_{3} ]=e_{1}
\end{aligned} \right.&  (1<2;2,1)  &  12 \\
\text{S}_{4} &\text{non-Lie}&
\left\{ \begin{aligned}
&~~[e_{1},e_{3} ]=e_{1}
\end{aligned} \right.
&(0<1;1,2) & 4\\
\text{S}_{5}(\alpha),\alpha\ne0 &\text{non-Lie}&
\left\{ \begin{aligned}
&[e_{1},e_{3} ]=\alpha e_{1},  [e_{2}, e_{3}]=e_{2}, \\
&[e_{3},e_{2} ]=-e_{2}
\end{aligned} \right.&(0<1;1,2) & 4\\
\text{S}_{6}&\text{non-Lie} & \left\{ \begin{aligned}
&[e_{2},e_{3} ]=e_{2},  [e_{3}, e_{2}]=-e_{2}, \\
&[e_{3},e_{3} ]=e_{1}
\end{aligned} \right. &- & -\\
\text{S}_{7}(\alpha),\alpha\ne0&\text{non-Lie}&
\left\{ \begin{aligned}
&~~[e_{1},e_{3} ]=\alpha e_{1},  [e_{2}, e_{3}]=e_{2}
\end{aligned} \right. &(0<1;1,2) & 4\\
\text{S}_{8}&\text{non-Lie}&
\left\{ \begin{aligned}
&~~[e_{1},e_{3} ]=e_{1}+e_{2},  [e_{3}, e_{3}]=e_{1}
\end{aligned} \right.& - & -\\
\hline \hline
\end{array}
\end{align*}
Indeed, TABLE I are obtained from the following four steps
\begin{enumerate}
\item   For the cases $\text{L}_1,\text{S}_1,\text{S}_2$,  endow them  with the Hermitian inner product $\langle\cdot,\cdot\rangle$   so  that  $\{e_1,e_{2},e_3\}$ is an orthonormal  basis.
\item  For the cases $\text{L}_2,\text{L}_3,\text{S}_4,\text{S}_5(\alpha), \text{S}_7(\alpha)$, use Theorem~\ref{solvable}.
\item  For the cases $\text{L}_4,\text{S}_6,\text{S}_8$, use Theorem~\ref{structure}.
\item  For  $\text{S}_{3}(\beta)$, it is an associative algebra. By \cite{ZY}, we know that    $\text{S}_{3}(\frac{1}{4})$ is isomorphic to $d_{21}$,    and $\text{S}_{3}(\beta)$, $\beta\ne \frac{1}{4}$ is isomorphic to $d_{22}$.
\end{enumerate}
Together with Lemma~\ref{CV},  we complete TABLE I.

\section{Summary and Comments}\label{ques}
This article can be thought to find the 'best' Hermitian inner products   in an isomorphism class of  a given Leibniz algebra,
which are   characterized   by  the critical points of $F_n: L_n \rightarrow \mathbb{R}$. Moreover, the 'best' Hermitian inner products if exist, are unique up to scaling and isometry, and    pose a severe restriction on the algebraic  structure of the given Leibniz algebra. The main results of this article are briefly  summarized as follows
\begin{enumerate}
\item [(a)] The eigenvalue types  for the critical points of   $F_n: S_n \rightarrow \mathbb{R}$ are necessarily   nonnegative, and the nilpotent critical points of   $F_n: S_n \rightarrow \mathbb{R}$ have positive eigenvalue types (Theorem~\ref{eigenvalue}, ~\ref{positive}).
\item [(b)] The  maxima and minima of the functional  $F_n: L_n \rightarrow \mathbb{R}$  are actually attained   at the symmetric Leibniz algebras (Theorem~\ref{x}, ~\ref{max}).
\item [(c)] The  structure  of  an arbitrary  critical point  of $F_n: S_n \rightarrow \mathbb{R}$ is characterized (Theorem~\ref{structure}--\ref{converse}).
\end{enumerate}
Although some generalizations are obtained (Remark~\ref{n-e},~\ref{s},~\ref{s-c}),  we still do not have a complete understanding for the critical points of  $F_n: L_n \rightarrow \mathbb{R}$.
Based on the discussion in previous sections,  it is natural and interesting to ask the following questions.

\begin{Question}\label{nonneg}
Do all critical points of $F_n: L_n \rightarrow \mathbb{R}$ necessarily  have nonnegative eigenvalue types?
\end{Question}

\begin{Question}\label{po}
Do all nilpotent critical points of $F_n: L_n \rightarrow \mathbb{R}$ necessarily  have positive eigenvalue types?
\end{Question}

One may also ask: \textit{do all critical points of $F_n: L_n \rightarrow \mathbb{R}$ necessarily lie in $S_n$?} We point out that this does not hold,  even for $n=2.$ Consider the two-dimensional non-symmetric Leibniz  algebra $\mu$: $e_1e_2=e_2.$ Then $[\mu]$ is a critical point of $F_2: L_2 \rightarrow \mathbb{R}$ with type $(0<1;1,1).$

\section{Acknowledgement}
 This paper is partially supported by NSFC (11931009 and 12131012) and NSF of Tianjin (19JCYBJC30600).

\end{document}